  \def\gn#1#2{{$\href{http://groupnames.org/\#?#1}{#2}$}}
\def\gn#1#2{$#2$}  
\tikzstyle{vertex}=[circle,fill, draw, inner sep=0pt, minimum size=6pt]
\newtheorem{theorem}{\bf Theorem}[section]
\newtheorem{corollary}[theorem]{\bf Corollary}
\newtheorem{lemma}[theorem]{\bf Lemma}
\newtheorem{example}[theorem]{\bf Example}
\newtheorem{remark}[theorem]{\bf Remark}
\newtheorem{definition}[theorem]{\bf Definition}
\begin{document}

\title{Cyclic Subgroup Lattices as Universal Sources of Power-Type Graphs}

\let\cleardoublepage\clearpage
\author{M. Mirzargar$^{a,b}$, S. Sorgun$^{a}$, M. J. Nadjafi-Arani$^{b,*}$ \\
	\small $^{a}$Department of Mathematics, Nevşehir Hacı Bektaş Veli University, 50300, Nevşehir, Turkey \\
	\small $^{b}$
Faculty of Science, Mahallat Institute of Higher Education, Mahallat, Iran \\
\small $^{*}$Corresponding author \tt{mjnajafiarani@gmail.com; mjnajafiarani@mahallat.ac.ir} \\
	\small \tt{m.mirzargar@mahallat.ac.ir} \\
    \small \tt{srgnrzs@gmail.com; ssorgun@nevsehir.edu.tr } \\
}	
	\maketitle
	
	\begin{abstract}
		\noindent
Power-type graphs, such as the power graph, the directed power graph, the enhanced power graph and the difference graph, encode significant information about the internal structure of a finite group.
Despite substantial investigation in recent years, the precise relationship between these graphs and the subgroup lattice of the underlying group has remained only partially understood. In this paper we establish a complete, explicit, and purely combinatorial correspondence between the enhanced power graph and the lattice of cyclic subgroups $\mathcal{L}_c(G)$ of a finite group $G$. We prove that these two objects determine each other uniquely:
an unlabeled enhanced power graph suffices to reconstruct $\mathcal{L}_c(G)$, and conversely, the labeled enhanced power graph can be reconstructed directly from $\mathcal{L}_c(G)$.
Exploiting this duality, we demonstrate that the reconstruction principle applies equally to the power graph, the directed power graph, and the difference graph, which may all be derived solely from the cyclic subgroup lattice, independent of the group operation.
 This bidirectional correspondence yields a combinatorial equivalence between power-type graphs and the cyclic subgroup lattice, providing a new framework for analyzing finite groups through graph-theoretic and lattice-theoretic data, free from algebraic complexity. \\
        

      {\bf Key Words:}  Enhanced power graph, Power graph,  Cyclic subgroup lattice, Directed power graph, Difference graph. \\
		MSC(2020): Primary: 05C25; Secondary: 20F99.
    
\end{abstract}

\maketitle

\section{Introduction}

Several graphs have been introduced on the set of elements of a group to capture various aspects of its algebraic structure in graph theoretic terms. 
Among these, power graph and enhanced power graph have emerged as one of the most effective tools for revealing the internal organization of a group. 
 Studying a group $G$ through its associated graph 
highlights many structural properties of $G$, while translating the algebraic complexity of the group into a more tractable combinatorial framework. The fundamental problem motivating such constructions is to what extent the graph associated with a group $G$ encodes or determines the intrinsic algebraic structure of $G$ itself.
Among these graphs, the enhanced power graph plays a particularly revealing role, as it encodes detailed information about the cyclic subgroups of $G$ and the way they intersect. Moreover, each maximal clique in the enhanced power graph corresponds to a maximal cyclic subgroup of $G$.

Denote $\mathcal{L}_c(G)$, \textit{the lattice of  cyclic subgroups of $G$} , ordered by inclusion; throughout the paper, we identify $\mathcal{L}_c(G)$ with its Hasse diagram, where the vertices represent cyclic subgroups and the edges correspond to the cover relations under inclusion. Understanding how the enhanced power graph  determines 
$\mathcal{L}_c(G)$ is a key step toward recovering deeper structural properties of $G$ from its graph representation, and it  motivates our main goal, to show that the enhanced power graph and the lattice of cyclic subgroups are, in essence, equivalent invariants for  finite groups.


 In this paper, we deal only with finite groups, and we call a graph $\Gamma$ an enhanced power graph if there exists at least one finite group $G$ such that $\Gamma\sim \mathrm{EPow}(G) $. Our Main Theorem establishes that, given an unlabeled enhanced power graph $\Gamma$, for a certain finite group $G$, the lattice of cyclic subgroups $\mathcal{L}_c(G)$  can be reconstructed purely from arithmetical and graph theoretical considerations, without any prior knowledge of the group structure itself.
Conversely, starting from a given cyclic subgroup lattice $\mathcal{L}_c(G)$ of a finite group $G$,   one can uniquely reconstruct the labeled enhanced power graph $\mathrm{EPow}(G)$. However, the  main theorem  of this paper is as follows:\\

\textbf{Main Theorem.} Let $G$ be a finite group. 
Then the enhanced power graph and the cyclic subgroup lattice of $G$ determine each other uniquely. 
More precisely, the cyclic subgroup lattice $\mathcal{L}_c(G)$ can be reconstructed from any unlabeled enhanced power graph $\mathrm{EPow(G)}$. Conversely, labeled enhanced power graph $G$ can be reconstructed from  $\mathcal{L}_c(G)$. 
Moreover, the labeled power graph, the directed power graph, and the difference graph of $G$ can also be reconstructed from  $\mathcal{L}_c(G)$.
\vspace{4mm}

Once the labeled enhanced power graph has been reconstructed uniquely from the cyclic subgroup lattice, 
the same reasoning extends naturally to the power graph. 
Since the enhanced power graph differs from the power graph only by the additional edges joining generators of the same cyclic subgroup, 
the reconstruction procedure applies in an analogous way. 
Moreover, by introducing the natural orientation of edges induced by subgroup inclusion, 
the same data in $\mathcal{L}_c(G)$ also determines the directed power graph. 
Consequently, the difference graph  can likewise be derived from $\mathcal{L}_c(G)$. 
Thus, all power-type graphs  completely determined by its cyclic subgroup lattice.

Moreover, when the enhanced power graph and the cyclic subgroup lattice of $G$ determine each other uniquely, the class of groups characterized by their enhanced power graphs coincides with those characterized by their cyclic subgroup lattices.
This correspondence naturally extends the relationship between graphs and lattices from the cyclic subgroup lattice to the full subgroup lattice, thereby generalizing the isomorphism problem to the entire subgroup structure of $G$. The same conclusion in the isomorphism problem also applies to the class of groups characterized by their  power-type graphs.

Section 2 presents the related work and preliminaries, including the fundamental definitions of the groups and graphs used throughout the paper, as well as a brief overview of the relevant literature. Section 3 states our main results, showing that the enhanced power graph and the cyclic subgroup lattice of a group determine each other uniquely.
Section 4 describes the reconstruction of the power graph, directed power graph, and difference graph from $\mathcal{L}_c(G)$.
 Section 5 addresses the isomorphism problem, determining which groups are uniquely characterized by their enhanced power graphs or, equivalently, by their subgroup lattices. In the final section, we outline several natural directions for further research arising from the reconstruction framework developed in this paper. 

\section{Related Work and Preliminaries}

A \emph{clique} in a graph $\Gamma$ is a subset of vertices in which every pair of distinct vertices is adjacent.    Equivalently, a clique is an induced complete subgraph of $\Gamma$. A clique $K$ in $\Gamma$ is called \emph{maximal clique} if it is not properly contained in any larger clique.  
A cyclic subgroup $C \le G$ is called \emph{maximal cyclic subgroup} if it is not properly contained in any larger cyclic subgroup of $G$.  
Equivalently, $C$ is maximal cyclic if whenever $C \subseteq H \le G$ and $H$ is cyclic, then $H = C$. Let $\varphi(d)$ denote \emph{Euler’s totient function}, i.e., the number of positive integers less than or equal to $d$ that are coprime to $d$. It is well-known that the generators of a cyclic subgroup of order $d$ are equal to $\varphi(d).$

The \emph{directed power graph} of a group \( G \), denoted \( \overrightarrow{\mathrm{Pow}}(G) \), 
is the directed graph whose vertex set is \( G \), with an arc \( x \to y \) whenever \( y = x^m \) for some integer \( m \). The directed power graph was introduced by Kelarev and Quinn, \cite{kelarev2000combinatorial}. 
The \emph{power graph} of \( G \), denoted \( \mathrm{Pow}(G) \), is obtained by ignoring directions and multiple arcs; 
equivalently, two distinct elements \( x, y \in G \) are adjacent if one is a power of the other. 
The notion of the power graph was first studied in~\cite{chakrabarty2009undirected}, and a comprehensive survey of its development and main results is provided in~\cite{kumar2021recent}.

The \emph{enhanced power graph} \( \mathrm{EPow}(G) \) of a group \( G \) is the simple undirected graph whose vertex set is \( G \), 
where two distinct elements \( x, y \in G \) are adjacent if and only if they generate a common cyclic subgroup; 
that is, \( \langle x \rangle = \langle y \rangle \) or \( \langle x, y \rangle \) is cyclic. 
The term enhanced power graph was introduced by Aalipour et al.~\cite{aalipour2016structure} 
as a graph “lying between” the power graph and the commuting graph, (which commuting graph has vertex set \( G \), 
with two distinct elements \( x, y \) adjacent whenever \( xy = yx \)).  A key fact used throughout this paper is that every maximal clique in the enhanced power graph 
of a finite group \( G \) corresponds to a maximal cyclic subgroup of \( G \)~\cite{aalipour2016structure}. 
Conversely, each cyclic subgroup of \( G \) induces a clique in \( \mathrm{EPow}(G) \). It is clear that, for any group \( G \),
$E(\mathrm{Pow}(G)) \subseteq E(\mathrm{EPow}(G))$,
where \( E(\Gamma) \) denotes the edge set of a graph \( \Gamma \). 
For further studies on enhanced power graphs, see~\cite{ma2022survey}. 

The \emph{difference graph} of a group \( G \), denoted \( \mathrm{D}(G) \),  
is defined as the graph whose edge set is
  $ E(\mathrm{D}(G)) \;=\; E(\mathrm{EPow}(G)) \setminus E(\mathrm{Pow}(G))$,
with all isolated vertices removed. For more details on the difference graph, see~\cite{biswas2024difference}.

In~\cite{bubboloni2025critical, das2023isomorphismproblempowergraphs}, the authors present an algorithm for reconstructing the directed power graph of a finite group 
from its undirected counterpart, first posed by Cameron as a question in~\cite{cameron2022}. Their method relies purely on arithmetical and graph-theoretical considerations, 
without using any group-theoretical information about \( G \). 
This approach is conceptually related to ours: while their reconstruction begins with the power graph, 
our method starts from the cyclic subgroup lattice \( \mathcal{L}_c(G) \) and produces all power-type graphs. This yields a unified framework that links the subgroup lattice to the entire family of power-type graphs, showing that each of these graphical representations is determined purely by the lattice’s inherent combinatorial properties. As a consequence, questions concerning groups determined by their power graphs (directed or enhanced), as studied in~\cite{mirzargar2022finite,mirzargar2025finite}, reduce naturally to questions about groups determined by their subgroup lattices.
Moreover, if two groups have isomorphic subgroup lattices, as investigated in~\cite{tuarnuauceanu2006groups}, then their power graphs (directed or enhanced) are all isomorphic as well.

\section{Main Results}
Let $\mathcal{C}_G$ denote the set of all cyclic subgroups of $G$. 
For each $M \in \mathcal{C}_G$ with $|M| = n$, consider the (internal) chain of all subgroups of $M$ ordered by inclusion:
\[
\{e\} = M_{d_1} < M_{d_2} < \cdots < M_{d_k} = M,
\]
where $1 = d_1 < d_2 < \cdots < d_k = n$ and $d_i$'s are positive divisors of $n$. Furthermore,
$M_{d_i}$ denotes the unique subgroup of $M$ of order $d_i$ for each $d_i \mid n$.

\begin{lemma}\label{lem1}
If $M, N \in \mathcal{C}_G$ and $d = |M \cap N|$,  
then for every divisor $m$ of $d$, the unique subgroups of order $m$ in $M$ and $N$ coincide with each other.

\end{lemma}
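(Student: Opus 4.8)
The plan is to exploit the defining feature of cyclic groups, namely that a finite cyclic group of order $n$ possesses exactly one subgroup of order $m$ for each divisor $m \mid n$ and none of any other order. This uniqueness is the entire engine of the argument, so the strategy is to locate a single subgroup that simultaneously serves as the order-$m$ subgroup of $M$ and of $N$.

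First I would observe that $M \cap N$ is a subgroup of the cyclic group $M$, and hence is itself cyclic, with $|M \cap N| = d$ by hypothesis. Since $m \mid d$, the fundamental theorem of cyclic groups produces a unique subgroup $K \le M \cap N$ with $|K| = m$. The point of passing through the intersection is that $K$ is then automatically contained in both $M$ and $N$ from the outset, rather than being manufactured separately in each.

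Next I would push $K$ into each factor. Because $K \le M \cap N \le M$ with $|K| = m$ and $m \mid d \mid |M|$, uniqueness inside the cyclic group $M$ forces $K$ to be \emph{the} subgroup of order $m$ in $M$. The identical reasoning with $N$ in place of $M$ shows $K$ is also the unique subgroup of order $m$ in $N$. Hence the two order-$m$ subgroups coincide, both equalling $K$, which is precisely the assertion.

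There is no serious obstacle here: the only facts invoked are that a subgroup of a cyclic group is cyclic and that cyclic groups have a unique subgroup of each admissible order. The one point worth flagging conceptually is that the conclusion depends only on the \emph{order} $d$ of the intersection, not on any finer information about how $M$ and $N$ are positioned inside $G$; the rigidity of the uniqueness property is strong enough that merely knowing $m \mid d$ pins down the order-$m$ subgroup of both $M$ and $N$ to the same object.
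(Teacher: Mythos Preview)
Your proof is correct and follows essentially the same approach as the paper: locate the unique subgroup $H_m$ of order $m$ inside the cyclic group $M\cap N$, observe that $H_m \subseteq M\cap N \subseteq M,N$, and invoke uniqueness of subgroups of a given order in a cyclic group to conclude. If anything, your write-up is slightly more explicit than the paper's, which sets up chains of subgroups that are not actually used in the argument.
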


\begin{proof}
Let the chains of subgroups of $M$ and $N$ containing $M \cap N$ and the unique subgroup of order $m$ as follows:
\[
\{e\} = M_{m_1} < M_{m_2} < \cdots < M_{m_r} = M,
\qquad
\{e\} = N_{n_1} < N_{n_2} < \cdots < N_{n_s} = N,
\]
where $1 = m_1 < \cdots < m_r = |M|$ and 
$1 = n_1 < \cdots < n_s = |N|$ are the divisors of $|M|$ and $|N|$, respectively. For each divisor $m$ of $d$, the cyclic group $M \cap N$ has a unique subgroup of order $m$; denote it by $H_m$.  
Clearly, $H_m \subseteq M \cap N \subseteq M, N$.
  Therefore, the unique subgroups of order $m$ in $M$ and $N$ coincide.
\end{proof}

\begin{theorem}\label{th1}
The cyclic subgroup lattice  $\mathcal{L}_c(G)$  of a group $G$  can be reconstructed uniquely from the adjacency structure of $\mathrm{EPow}(G)$, without reference to the elements of $G$.

\end{theorem}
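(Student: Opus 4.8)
The plan is to recover $\mathcal{L}_c(G)$ in three stages: locate the maximal cyclic subgroups inside the graph, endow each with its (forced) internal divisor lattice, and glue these local lattices together along the vertex-set intersections of the corresponding cliques. First I would extract the maximal cliques $M_1,\dots,M_t$ of $\mathrm{EPow}(G)$. By the key fact that every maximal clique of the enhanced power graph of a finite group is exactly a maximal cyclic subgroup and conversely~\cite{aalipour2016structure}, each $M_i$ is the vertex set of a maximal cyclic subgroup of order $n_i=|M_i|$, and distinct maximal cyclic subgroups yield distinct maximal cliques. Since a cyclic group of order $n_i$ has exactly one subgroup of each order $d\mid n_i$, carrying $\varphi(d)$ generators, the internal cyclic-subgroup lattice of $M_i$ is forced to be the divisor lattice of $n_i$; this is attached to each $M_i$ purely arithmetically, with no appeal to the elements of $G$.

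For the gluing step I would work inside the single ambient vertex set $V=G$ and compute the intersection sizes $d_{ij}=|M_i\cap M_j|$. Because $M_i\cap M_j$ is a subgroup of the cyclic group $M_i$, it is the unique subgroup of order $d_{ij}$ in $M_i$, and likewise in $M_j$; invoking Lemma~\ref{lem1}, for every $m\mid d_{ij}$ the order-$m$ subgroups of $M_i$ and $M_j$ coincide as vertex sets. This produces the exact matching rule: the order-$d$ subgroup $H_i^d\le M_i$ equals the order-$d$ subgroup $H_j^d\le M_j$ if and only if $d\mid d_{ij}$. One direction is Lemma~\ref{lem1}; conversely, if $H_i^d=H_j^d$ then this common subgroup lies in $M_i\cap M_j$, forcing $d\mid d_{ij}$. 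I would then define the vertices of the reconstructed lattice to be the equivalence classes of pairs $(i,d)$ with $d\mid n_i$ under $(i,d)\sim(j,d)\iff d\mid d_{ij}$, assign each class the order $d$, and declare $[(i,d)]\le[(j,d')]$ precisely when $d\mid d'$ and $(i,d)\sim(j,d)$.

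To finish I would verify that this abstract poset is exactly $\mathcal{L}_c(G)$. Every cyclic subgroup of $G$ sits inside some maximal cyclic subgroup and hence appears as a pair $(i,d)$; equal subgroups have equal order and are identified precisely by the rule above; and inclusions are reproduced faithfully, since inside a cyclic group the order-$d$ subgroup is contained in the order-$d'$ subgroup exactly when $d\mid d'$. The relation $\sim$ is automatically an equivalence relation because the underlying group makes the matchings genuine set equalities, so transitivity of $\sim$ is inherited from transitivity of $=$ among the subgroups $H_i^d$; consequently the combinatorial gluing is consistent across all cliques and the glued object is isomorphic to $\mathcal{L}_c(G)$, with cover relations recovered as the covers of this poset.

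The main obstacle, and the reason the argument cannot be purely local, is that each maximal clique is a complete graph: adjacency alone cannot distinguish vertices of different orders inside one maximal cyclic subgroup (for $G=\mathbb{Z}_4$ the entire enhanced power graph is $K_4$, yet its lattice is the three-element chain $\{e\}<C_2<C_4$). The resolution is that the internal chain is not read off from edges but imposed arithmetically through the divisor lattice, while the cross-clique intersection sizes $d_{ij}$ supply exactly the data needed to decide which of these arithmetically imposed subgroups are shared. Making the matching provably exact, namely the equivalence $H_i^d=H_j^d\iff d\mid d_{ij}$, and checking that it assembles consistently across all maximal cliques, is the technical heart of the proof.
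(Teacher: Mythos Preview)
Your proposal is correct and follows essentially the same strategy as the paper: list the maximal cliques (which are the maximal cyclic subgroups), attach to each its divisor lattice, identify the order-$d$ nodes of cliques $M_i$ and $M_j$ whenever $d\mid|M_i\cap M_j|$ via Lemma~\ref{lem1}, and verify that the resulting quotient poset is isomorphic to $\mathcal{L}_c(G)$. Your explicit proof of the converse direction of the matching rule ($H_i^d=H_j^d\Rightarrow d\mid d_{ij}$) and your observation that transitivity of $\sim$ is inherited from equality of the underlying subgroups are slightly more detailed than the paper's treatment, but the overall architecture is the same.
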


\begin{proof}
List all maximal cliques of $\mathrm{EPow}(G)$. For each maximal clique $K_C$ corresponding to a maximal cyclic subgroup $C$, consider its local divisor poset
\[
\mathsf{Div}(C)=\{(C,d): d\mid |C|\},
\]
ordered by $d\mid d'$. 
Define the local cover relation $\prec_C$ on $\mathsf{Div}(C)$ by
\[
(C,d)\prec_C (C,d') \quad \Longleftrightarrow \quad d\mid d' \text{ and there is no } m \text{ with } d < m < d' \text{ and } m\mid |C|.
\]
Note that $(C,d)$ and $(C,d')$ correspond to  unique cliques $K^C_d$ and $K^C_{d'}$ in $EPow(G)$, respectively. Moreover, $(C,d)\prec_C (C,d')$ shows the nested cliques $K^C_d \subset K^C_{d'} \subset K_C$ in the graph.

To identify vertices arising from intersections of maximal cliques, for any two maximal cliques $K_C$ and $K_D$, if $r=|K_C\cap K_D|$, then by Lemma \ref{lem1}, for each divisor $d\mid r$ we identify
\[
(C,d)\sim (D,d).
\]
Let $\sim$ denote the equivalence relation generated by all such identifications, and define
\[
V \;=\; \bigcup_{K_C} \mathsf{Div}(C)\big/\!\sim.
\]

Define the global cover relation $\prec$ on $V$. 
For two classes $[(C,d)]$ and $[(D,d')]$, 
\begin{equation}\label{2}
    [(C,d)] \prec [(D,d')]  \quad \Longleftrightarrow \quad (C,d')\sim (D,d') \text{ and } (C,d)\prec_C (C,d')
\end{equation}

We now prove that the poset $(V,\prec)$ is isomorphic to the cyclic subgroup lattice $\mathcal{L}_c(G)$. Define a map
\[
f: V \longrightarrow \mathcal{C}_G,\qquad f\big([(C,d)]\big)= C_d.
\]
where $C_d$ is a cyclic subgroup of order $d$.

To see that $f$ is well-defined, note that if $[(C,d)]= [(D,d)]$ then  $(C,d)\sim (D,d)$ and $|K_C\cap K_D|$ is divisible by $d$, so both $C$ and $D$ contain a unique subgroup of order $d$, implies $C_d=D_d$.
 The map is injective because distinct classes represent distinct orders or distinct maximal cyclic subgroups that have not been identified. $f$ is surjective because every cyclic subgroup of $G$ appears as $C_d$ for some maximal cyclic subgroup $C$ and some $d\mid |C|$.

It remains to verify  that the cover relation on $V$ corresponds exactly to the cover relation in the cyclic subgroup lattice $\mathcal{L}_c(G)$. 

If $[(C,d)]\prec [(D,d')]$ then by equation (\ref{2}), $(C,d)\prec_c (C,d')$, and hence  $C_d \subset C_{d'}$ with no intermediate cyclic subgroup. 
Thus $f$ preserves cover relations. Conversely, if $H \subset K$ are cyclic subgroups with no intermediate cyclic subgroup, choose a maximal cyclic subgroup $C$ containing $K$. 
Then $H=C_d$ and $K=C_{d'}$ for divisors $d<d'$ of $|C|$ with $(C,d)\prec_c (C,d')$ in $\mathsf{Div}(C)$, so $[(C,d)]\prec [(C,d')]$ in $V$, and $f$ reflects cover relations.

Therefore, $f$ is an isomorphism of posets, and hence $(V,\prec)$ is isomorphic to $\mathcal{L}_c(G)$.
\end{proof}

\begin{remark}
If $d \mid |C|$ and there is no other maximal cyclic subgroup $D$ with 
$|K_C \cap K_D|$ divisible by $d$, then  $[(C,d)]$ is a class of size one. This node never identified 
with a node from another clique. In this case, $(C,d)$ remains as an internal 
vertex in the chain associated to $C$, representing the subgroup $C_d$ that is 
not shared with any other maximal cyclic subgroup of $G$.
\end{remark}

\begin{figure}
   \begin{center}
    \begin{tikzpicture}[
  scale=1.1,
  dot/.style={circle,fill=black,inner sep=1.9pt},
  kA/.style={line width=0.35pt},          
  kB/.style={line width=0.35pt},   
  kC/.style={line width=0.35pt}    
]
  \coordinate (x1) at (0.50, 0.00);
  \coordinate (x2) at (1.10, 0.60);
  \coordinate (x3) at (0.95,-0.55);

  \coordinate (a1) at (-2.00, 0.20);
  \coordinate (a2) at (-1.55,-1.05);
  \coordinate (a3) at (-0.95, 1.05);

  \coordinate (b1) at ( 0.40, 2.25);
  \coordinate (b2) at ( 1.85, 2.55);
  \coordinate (b3) at (-0.50, 2.80);

  \coordinate (c1) at ( 3.20, 0.30);
  \coordinate (c2) at ( 4.40,-1.05);
  \coordinate (c3) at ( 3.60, 1.45);

  \foreach \v in {x1,x2,x3,a1,a2,a3,b1,b2,b3,c1,c2,c3}
    \node[dot] at (\v) {};

  \draw[kA] (x1)--(x2) (x1)--(x3) (x1)--(a1) (x1)--(a2) (x1)--(a3)
            (x2)--(x3) (x2)--(a1) (x2)--(a2) (x2)--(a3)
            (x3)--(a1) (x3)--(a2) (x3)--(a3)
            (a1)--(a2) (a1)--(a3) (a2)--(a3);

  \draw[kB] (x1)--(x2) (x1)--(x3) (x1)--(b1) (x1)--(b2) (x1)--(b3)
            (x2)--(x3) (x2)--(b1) (x2)--(b2) (x2)--(b3)
            (x3)--(b1) (x3)--(b2) (x3)--(b3)
            (b1)--(b2) (b1)--(b3) (b2)--(b3);

  \draw[kC] (x1)--(x2) (x1)--(x3) (x1)--(c1) (x1)--(c2) (x1)--(c3)
            (x2)--(x3) (x2)--(c1) (x2)--(c2) (x2)--(c3)
            (x3)--(c1) (x3)--(c2) (x3)--(c3)
            (c1)--(c2) (c1)--(c3) (c2)--(c3);
\end{tikzpicture}
\caption{The enhanced power graph $C_2\times C_6$}\label{fig1}
\end{center}
\end{figure}
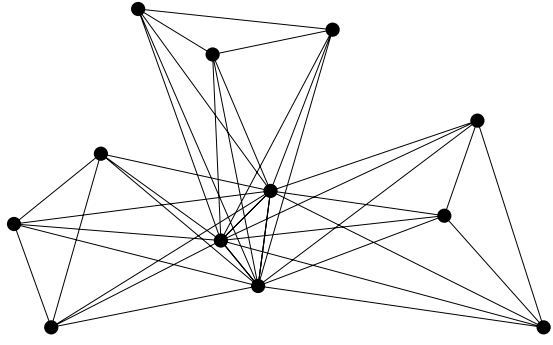

\begin{example}\label{ex1}
Suppose we are given a graph $\Gamma$ with $12$ vertices, as shown in Figure~\ref{fig1}, which is known to be the enhanced power graph of a group, that is, $\Gamma = \mathrm{EPow}(G)$, although no information about $G$ itself is available.  
Our goal is to construct the Hasse diagram of cyclic subgroups $\mathcal{L}_c(G)$, in accordance with the algorithm given in the proof of Theorem \ref{th1} as follow:

 From  $\Gamma$,  we observe three distinct maximal cliques of size six, $K_C$, $K_{D}$ and $K_{E}$, corresponding to the three maximal cyclic subgroups $[(C,6)]$, $[(D,6)]$, $[(E,6)]$ of order six. At this point, we know that $\mathcal{L}_c(G)$  will have three nodes in top level. 

 \begin{enumerate}
     \item For every two maximal cliques $K_X$ and $K_Y$ $(X,Y \in \{C,D,E\})$ we have
    \[
        |K_X \cap K_Y| = 3.
    \]
    Hence, all subgroup vertices of orders dividing three
    in $X$ and $Y$ must coincide.  Therefore we identify:
    \[
        \{(C,3),  (D,3), (E,3)\} = [(C,3)], 
        \qquad 
        \{(C,1),  (D,1), (E,1)\} = [(C,1)].
    \]
            \item   
    Since no intersection of size $2$ appears, 
    the  subgroups of order two,  $[(C,2)],\ [(D,2)],\ [(E,2)]$, remain distinct.   Thus $\mathcal{L}_c(G)$  consists of:
    \[
    \begin{aligned}
      &\text{Level 3: } C_6, D_6, E_6, && |C|=|D|=|E|=6,\\
      &\text{Level 2: } C_3, C_2, D_2, E_2, 
         && |C_3|=3,\ |X_2|=2,\\
      &\text{Level 1: } C_1, && |\{e\}|=1.
    \end{aligned}
    \]
\end{enumerate}

The covering relation are given by equation (\ref{2}):
\[
\begin{aligned}
\{e\} &\prec C_3 \prec C_6,\\
\{e\} &\prec C_3 \prec D_6,\\
\{e\} &\prec C_3 \prec E_6,\\
\{e\} &\prec C_2 \prec C_6,\\
\{e\} &\prec D_2 \prec D_6,\\
\{e\} &\prec E_2 \prec E_6.
\end{aligned}
\]

\noindent
    The cover relations above connect adjacent levels, forming the complete layered structure of $\mathcal{L}_c(G)$ (see Fig.~\ref{fig2}).
\end{example}

\begin{theorem}\label{thm:bottom_up}
The enhanced power graph $\mathrm{EPow}(G)$  can be reconstructed from the lattice of cyclic subgroups,
$\mathcal{L}_c(G)$, without reference to the elements of $G$.
\end{theorem}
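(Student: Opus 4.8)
The plan is to build $\mathrm{EPow}(G)$ from $\mathcal{L}_c(G)$ in two stages — first instantiating the vertex set, then prescribing adjacency — and to verify that the resulting labeled graph is isomorphic to $\mathrm{EPow}(G)$ via the natural map sending each element to the cyclic subgroup it generates. The only data I would draw from $\mathcal{L}_c(G)$ are, for each node, its order $|C|$ together with the inclusion order among the nodes; no group operation is invoked, in keeping with the claim of reconstruction ``without reference to the elements of $G$.''

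For the vertex set, I would use the fact recalled in Section~2 that a cyclic subgroup of order $d$ has exactly $\varphi(d)$ generators, together with the observation that every element of $G$ generates a unique cyclic subgroup. Thus $G$ partitions as the disjoint union $\bigsqcup_{C\in\mathcal{C}_G}\mathrm{gen}(C)$, where $\mathrm{gen}(C)$ denotes the set of generators of $C$ and $|\mathrm{gen}(C)|=\varphi(|C|)$. Accordingly, for each node $C$ of $\mathcal{L}_c(G)$ I would create exactly $\varphi(|C|)$ vertices $(C,1),\dots,(C,\varphi(|C|))$; since $\sum_{C}\varphi(|C|)=|G|$, this yields precisely $|G|$ vertices, each label recording which cyclic subgroup the vertex generates.

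For the edges, the governing principle is the equivalence: for $x,y\in G$ with $\langle x\rangle=C$ and $\langle y\rangle=D$, the subgroup $\langle x,y\rangle$ is cyclic if and only if $C$ and $D$ possess a common upper bound in $\mathcal{L}_c(G)$ (equivalently, lie in a common maximal cyclic subgroup). One direction is immediate, as $\langle x,y\rangle$ is itself a cyclic overgroup of both; for the converse, any cyclic $E$ containing $C$ and $D$ also contains $\langle x,y\rangle$, which is then cyclic as a subgroup of a cyclic group. Since the existence of a common upper bound is entirely a function of the inclusion order, I would declare $(C,i)$ and $(D,j)$ adjacent precisely when $C$ and $D$ admit such a common cyclic upper bound in the poset. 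Taking $C=D$ recovers the fact that the $\varphi(|C|)$ generators of each $C$ form a clique, in agreement with the definition of $\mathrm{EPow}(G)$.

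The correctness step then amounts to checking that the vertex bijection $x\mapsto(\langle x\rangle,\cdot)$ carries the edge relation of $\mathrm{EPow}(G)$ onto the prescribed relation, which follows at once from the equivalence above. The one genuinely delicate point — the main obstacle — is the sense in which the output is \emph{labeled}: the lattice cannot distinguish the individual generators of a fixed cyclic subgroup $C$, so the reconstruction is canonical only up to permuting the $\varphi(|C|)$ vertices within each fiber. I would resolve this by noting that any two generators of $C$ have identical neighborhoods in $\mathrm{EPow}(G)$ — each is adjacent to a generator of $D$ exactly when $C$ and $D$ share a cyclic upper bound, a condition depending only on $C$ — so these fiber permutations are graph automorphisms. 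Hence the construction determines $\mathrm{EPow}(G)$ unambiguously as a labeled graph whose labels record the generating cyclic subgroup, completing the reconstruction.
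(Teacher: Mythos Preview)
Your argument is correct. Both your proof and the paper's rest on the same two facts: the vertex set of $\mathrm{EPow}(G)$ partitions as $\bigsqcup_{C\in\mathcal{C}_G}\mathrm{gen}(C)$ with $|\mathrm{gen}(C)|=\varphi(|C|)$, and two elements are adjacent precisely when their generated cyclic subgroups lie below a common node of $\mathcal{L}_c(G)$. The difference is purely one of presentation. The paper proceeds inductively up the Hasse diagram, inserting the clique $K_C$ for each cyclic subgroup $C$ and gluing it along its already-processed predecessors; the labeling by pairs $(C_d,i)$ is then handled in a separate statement (Theorem~\ref{thm:canonical-labels}). You instead give the reconstruction in one stroke---create $\varphi(|C|)$ labeled vertices for each node and declare adjacency via the common-upper-bound criterion---and fold the labeling and the twin-vertex observation (that generators of the same $C$ have identical neighborhoods) into the same argument. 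Your route is more economical and makes the well-definedness of the labeled output explicit; the paper's level-by-level procedure has the advantage of reading as an algorithm, which fits the reconstruction theme and parallels the later constructions of $\mathrm{Pow}(G)$ and $\overrightarrow{\mathrm{Pow}}(G)$.
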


\begin{proof}
We are given only the Hasse diagram $\mathcal{L}_c(G)$, with no additional information about the group $G$ itself. 
For each cyclic subgroup $C \le G$, the enhanced power graph contains the complete subgraph $K_C$ on the vertex set $C$. 
We reconstruct the  graph $\Gamma$ by assembling these cliques level by level along the Hasse diagram $\mathcal{L}_c(G)$, beginning from the subgroup $\{e\}$ and proceeding upward.


Suppose that stage $t$ from $\mathcal{L}_c(G)$ where we have processed a collection $\mathcal{L}^{(t)}$ of cyclic subgroups
of $G$. We assume inductively that:

\begin{itemize}
    \item For every $C \in \mathcal{L}^{(t)}$, the clique $K_C$ has already been inserted into $\Gamma$,
          meaning all pairs $\{x,y\}$ with $x,y \in C$ are edges of $\Gamma$.
    \item Whenever two cyclic subgroups $A,B \in \mathcal{L}^{(t)}$,
          $A \cap B = D$, the vertices of $D$ are already identified consistently in $\Gamma$ i.e.\ $D$ is represented as the same set of vertices inside both $A$ and $B$.
\end{itemize}
Now consider the level $\mathcal{L}^{(t+1)}$ consists of all cyclic subgroups $C$ that cover at least
one subgroup we have already processed, and which have not themselves been processed yet.

For each such $C \in \mathcal{L}^{(t+1)}$, do the following:
\begin{enumerate}
\item[(a)] 
Insert all edges of $K_C$ into $\Gamma$, i.e.\ for all $x,y \in C$, add the edge $\{x,y\}$ to $\Gamma$.

\item[(b)] 
Let $D \in \mathcal{L}^{(t)}$ be any subgroup with $D \prec C$ in $\mathcal{L}_c(G)$.  
Since $D$ is already processed, its vertices are already fixed in $\Gamma$.
We now identify $D$
 inside $C$, requiring that the representation of 
$D$
 as a subset of 
$C$
 is represented by the same vertices in $\Gamma$
 as the copy of $D$ we have already constructed in earlier stages.
 This forces $K_C$ to be glued onto the existing graph $\Gamma$ exactly along $K_D$.

If $C$ covers several distinct processed subgroups $D_1, D_2, \dots$ in $\mathcal{L}_c(G)$,
then we glue $K_C$ along each of them simultaneously.  
Since $\mathcal{L}_c(G)$ tells us all $D_i \prec C$, this gluing is canonical.
\end{enumerate}

After performing (a) and (b) for every $C \in \mathcal{L}^{(t+1)}$, the inductive hypotheses
remain true if we replace $\mathcal{L}^{(t)}$ by $\mathcal{L}^{(t+1)}$. Furthermore, every subgroup in $\mathcal{L}^{(t+1)}$ has had its clique inserted,
and intersections are glued consistently.

Since $\mathcal{L}_c(G)$ is finite and each cover relation strictly increases subgroup size,
this process eventually exhausts all cyclic subgroups of $G$.  
That is, for some $N$, we have
\[
   \mathcal{C}_G = \bigcup_{t=0}^N \mathcal{L}^{(t)}.
\]
At that point, every cyclic subgroup $C \le G$ has been inserted as a clique $K_C$ into $H$,
and whenever $C$ contains $D$, the vertices of $D$ inside $C$ are identified with the
already fixed vertices of $D$ from previous stages.

We now show that the resulting graph $\Gamma$ is exactly $\mathrm{EPow}(G)$.

\smallskip

Let $xy$ be an edge in $\mathrm{EPow}(G)$.
Then $\langle x,y\rangle = \langle z\rangle$,
so $x$ and $y$ lie in a common cyclic subgroup $C = \langle z\rangle$.
That cyclic subgroup $C$ is some node of $\mathcal{L}_c(G)$, so at some stage $t$ it entered
$\mathcal{L}^{(t)}$ from $\mathcal{L}^{(t-1)}$ via a cover relation $D \prec C$.
At that moment we added all edges of $K_C$ to $H$.
Therefore $\{x,y\} \in E(\Gamma)$.

Next, we show that every edge of $\Gamma$ is in $\mathrm{EPow}(G)$. 
By construction, the only edges we ever add are edges inside some $K_C$
for a cyclic subgroup $C \le G$.
If $xy$ is an edge of $\Gamma$, then $x,y \in C$ for some cyclic $C$.  
That means $xy$ is an edge in $\mathrm{EPow}(G)$ based on the definition of the enhanced power graph.

Finally, we verify that the vertex set of $\Gamma$ coincides with that of $\mathrm{EPow}(G)$. Clearly, every vertex appearing in $\Gamma$ during the construction is an actual element of $G$.
Conversely, for any element $g\in G$, we have $g\in\langle g\rangle$, 
so $g$ appears in the construction when the cyclic subgroup $\langle g\rangle$ is processed. Thus $V(\Gamma) = G = V(\mathrm{EPow}(G))$. Consequently, $\Gamma$ and $\mathrm{EPow}(G)$ have identical vertex and edge sets, and therefore $\Gamma=\mathrm{EPow}(G)$.
\end{proof}
\begin{figure}
    \centering
\tikzset{sgplattice/.style={inner sep=1pt,norm/.style={red!50!blue},char/.style={blue!50!black},
  lin/.style={black!50}},cnj/.style={black!50,yshift=-2.5pt,left=-1pt of #1,scale=0.5,fill=white}}

\begin{tikzpicture}[scale=1.0,sgplattice]
	\node[char] at (3.12,0) (1) {\gn{C1}{C_1}};
	\node[norm] at (4.12,1.09) (2) {\gn{C2}{C_{2(2)}}};
	\node[norm] at (0.125,1.09) (3) {\gn{C2}{C_{2(1)}}};
	\node[norm] at (6.12,1.09) (4) {\gn{C2}{C_{2(3)}}};
	\node[char] at (2.12,1.09) (5) {\gn{C3}{C_3}};
	\node[norm] at (2.12,2.54) (7) {\gn{C6}{C_{6(2)}}};
	\node[norm] at (0.125,2.54) (8) {\gn{C6}{C_{6(1)}}};
	\node[norm] at (6.12,2.54) (9) {\gn{C6}{C_{6(3)}}};
	\draw[lin] (1)--(2) (1)--(3) (1)--(4) (1)--(5) 
	(2)--(7) (5)--(7) (3)--(8) (5)--(8) (4)--(9) (5)--(9) ;
\end{tikzpicture}
    \caption{ $\mathcal{L}_c(C_2\times C_6)$}        \label{fig2}

\end{figure}

\begin{example}\label{ex2}
    We are given only the Hasse diagram $\mathcal{L}_c(G)$ in Figure~\ref{fig2}, and we do not know the group $G$ itself.  $\mathcal{L}_c(G)$  has three cyclic subgroups of order $2$ (denote them $C_{2(1)},C_{2(2)},C_{2(3)}$), one cyclic subgroup of order $3$ (denote it $C_3$) on the middle level, and three cyclic subgroups of order $6$ (denote them $C_{6(1)},C_{6(2)},C_{6(3)}$) on the top level, together with the trivial subgroup $C_1=\{e\}$.  We reconstruct the  graph $\Gamma$ by assembling these cliques level by level along the Hasse diagram $\mathcal{L}_c(G)$, beginning from the subgroup $\{e\}$ and proceeding upward.

\smallskip
\noindent\text{Stage $t=0$:}
Set $\mathcal{L}^{(0)}=\{\{e\}\}$. Insert the clique $K_{\{e\}}$.

\smallskip
\noindent\text{Stage $t=1$:}
Let $\mathcal{L}^{(1)}=\{\{e\},\, C_3,\, C_{2(1)},C_{2(2)},C_{2(3)}\}$. 
For each $H\in \mathcal{L}^{(1)}$, insert the clique $K_H$ and glue it to $\{e\}$ where this group already processed in the last level and we get the Fig. \ref{fig3}(a).

\smallskip
\noindent\text{Stage $t=2$:}
Let $\mathcal{L}^{(2)}=\{\{e\},\, C_3,\,C_{2(1)},C_{2(2)},C_{2(3)},\, C_{6(1)},C_{6(2)},C_{6(3)}\}$.
For each $C_{6(i)}$ (order $6$), insert the clique $K_{C_{6(i)}}$ and glue it along its already processed
subgroups: the unique subgroup  $C_3$ and the corresponding subgroup of order $2$
(denoted $C_{2(i)}$).
\smallskip
After Stage $t=2$, every cyclic subgroup has been inserted as a clique and all intersections have been glued
consistently along previously processed subgroups (see Fig. \ref{fig3}(b)). The resulting graph $\Gamma$ has:
\begin{enumerate}
    \item $K_{C_{6(1)}},K_{C_{6(2)}},K_{C_{6(3)}}$ of size six,
\item $K_{C_3}$  of size 3 contained in each $K_{C_{6(i)}}$,
\item $K_{C_{2(i)}}$ of size 2 contained in  $K_{C_{6(i)}}$, $1\leq i\leq 3$.
\end{enumerate}
By Theorem~\ref{thm:bottom_up}, the resulting graph $\Gamma$ is exactly $\mathrm{EPow}(G)$, as shown in Fig.~\ref{fig1}.

\end{example}

\begin{figure}
\label{fig3}
   \begin{center}
   \tikzset{char/.style={blue!50!black}}
    \begin{tikzpicture}[
  scale=1.1,
  dot/.style={circle,fill=black,inner sep=1.9pt},
  kA/.style={line width=0.35pt},          
  kB/.style={line width=0.35pt},   
  kC/.style={line width=0.35pt}  
]
 \coordinate (y0) at (-3, 0.00);
  \coordinate (y1) at (-4, 1);
  \coordinate (y2) at (-2, 1);
  \coordinate (y3) at (-5,1);
  \coordinate (y4) at (-1,1);
  \coordinate (y5) at (0,1);
\draw[kC] (y0)--(y1) (y0)--(y2) (y0)--(y3) (y0)--(y4) (y0)--(y5)
            (y1)--(y2);
            \foreach \v in {y0,y1,y2,y3,y4,y5}
    \node[dot] at (\v) {};
    \node[char] at (-3,-1) (1) {\gn{(a)}{(a)}};
  \coordinate (x0) at (3, 0.00);
  \coordinate (x1) at (2.4, 1.3);
  \coordinate (x2) at (3.9, 1.3);
  \coordinate (c1) at (0, 2.5);
  \coordinate (c2) at (1, 2.5);
  \coordinate (d1) at (2.4, 2.5);
  \coordinate (d2) at (3.9, 2.5);
  \coordinate (e1) at (5.5, 2.5);
  \coordinate (e2) at (6.5, 2.5);
  \coordinate (x3) at (1,1);
  \coordinate (x4) at (5,1);
  \coordinate (x5) at (6,1);

\draw[kC] (x0)--(x1) (x0)--(x2) (x0)--(x3) (x0)--(x4) (x0)--(x5)
            (x1)--(x2) 
            (x0)--(c1) (x1)--(c1) (x2)--(c1) (x0)--(c2) (x1)--(c2) (x2)--(c2) (x3)--(c1) (x3)--(c2) (x1)--(x3) (x2)--(x3) (c1)--(c2)
            (x0)--(d1) (x1)--(d1) (x2)--(d1) (x0)--(d2) (x1)--(d2) (x2)--(d2) (x4)--(d1) (x4)--(d2) (x1)--(x4) (x2)--(x4) (d1)--(d2)
            (x0)--(e1) (x1)--(e1) (x2)--(e1) (x0)--(e2) (x1)--(e2) (x2)--(e2) (x5)--(e1) (x5)--(e2) (x1)--(x5) (x2)--(x5) (e1)--(e2)
             
            ;
            \foreach \v in {x0,x1,x2,x3,x4,x5,c1,c2,d1,d2,e1,e2}
    \node[dot] at (\v) {};
        \node[char] at (3,-1) (1) {\gn{(b)}{(b)}};

\end{tikzpicture}
\caption{ Example of reconstruction of the enhanced power graph from cyclic subgroup lattice: (a) Stage $t=1$, (b) Stage $t=2$  }
\end{center}
\end{figure}
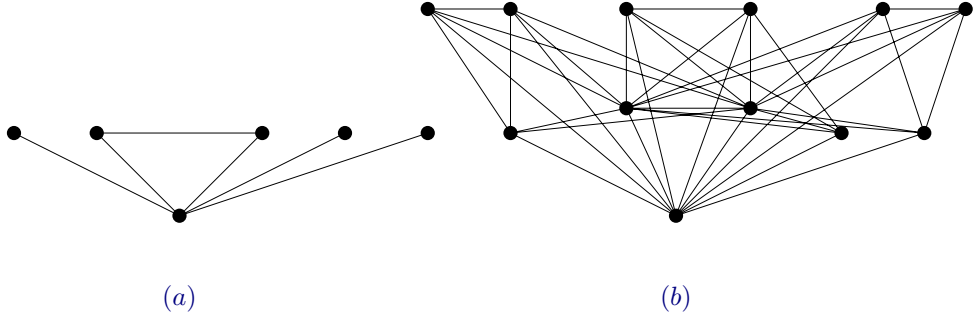

After reconstructing the enhanced power graph in Theorem~\ref{thm:bottom_up}, our next goal is to assign labels to the vertices of the enhanced power graph directly from \(\mathcal{L}_c(G)\), without using any additional structural information about \(G\).

Recall that each vertex of $\mathcal{L}_c(G)$ can be renamed as an equivalence class 
$
[(C,d)] \in (V,\prec)
$
representing the unique cyclic subgroup $C_d \le G$ of order $d$, where 
$(C,d) \sim (C',d)$ whenever $C_d = C'_d$ (cf.\ the identification rule in 
the proof of Theorem \ref{th1}).  
Thus, each class $[(C,d)]$ corresponds to one subgroup of $G$. Note that $[(C,d)]$ is not just depend on maximal subgroup $C$, it
can be the intersection of several maximal subgroups. 
For a cyclic subgroup \(H \le G\) of order \(d\),  define the set of its
immediate predecessors in \(\mathcal{L}_c(G)\) by
\[
\mathrm{Pred}(H) = \{\, E \le H : E \prec H \text{ in } \mathcal{L}_c(G) \,\},
\]
Equivalently, if $d = p_1^{\alpha_1}\cdots p_r^{\alpha_r}$, then 
$
\mathrm{Pred}(H) = \{\, H_{d/p} : p \mid d\,\}$,
where $p$ is prime and $|H_{d/p}|=\frac{d}{p}$.
We now define the set of new vertices introduced when attaching the clique $K_C$ in the proof of
Theorem~\ref{thm:bottom_up} (part (b)).  If $C_d$ covers distinct processed subgroups
$\mathrm{Pred}(C_d)$ in $\mathcal{L}_c(G)$, then $K_C$ is glued to the already existing
vertices corresponding to these subgroups in $\Gamma$.  Hence we set
\[
\operatorname{New}([(C,d)]) 
:= 
C_d \setminus \bigcup_{E\in \mathrm{Pred}(C_d)} E.
\]

\begin{lemma}\label{prop:new-is-generators-global}
For each class $[(C,d)] \in V$, we have
\[
\mathrm{New}([(C,d)]) = \{\, x \in G : \langle x \rangle = C_d \,\}.
\]
Consequently,
$|\mathrm{New}([(C,d)])| = \varphi(d).$

\end{lemma}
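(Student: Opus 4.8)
The plan is to reduce the assertion to a standard fact about the internal structure of a cyclic group, and then read off the cardinality. Fix a class $[(C,d)] \in V$ and let $C_d \le G$ be the associated cyclic subgroup of order $d$. Since the class $[(C,d)]$ determines the single subgroup $C_d$ regardless of which maximal cyclic subgroup $C$ represents it, the quantity $\mathrm{New}([(C,d)])$ is well-defined; in particular $C_d$ may be the intersection of several maximal cyclic subgroups, but this plays no role in the argument, which depends only on the subgroups lying inside $C_d$. Recall from the setup that the immediate predecessors of $C_d$ in $\mathcal{L}_c(G)$ are exactly its maximal proper subgroups, namely $\mathrm{Pred}(C_d) = \{\,H_{d/p} : p \mid d \text{ prime}\,\}$; this holds because every subgroup of the cyclic group $C_d$ is itself cyclic and therefore already appears as a node of $\mathcal{L}_c(G)$. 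Consequently $\mathrm{New}([(C,d)]) = C_d \setminus \bigcup_{p \mid d} H_{d/p}$ is precisely the set of elements of $C_d$ that lie in none of its maximal proper subgroups.

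First I would establish the inclusion $\{\,x : \langle x \rangle = C_d\,\} \subseteq \mathrm{New}([(C,d)])$. If $x$ generates $C_d$, then $x$ lies in no proper subgroup of $C_d$; in particular $x \notin H_{d/p}$ for every prime $p \mid d$, and hence $x \in \mathrm{New}([(C,d)])$.

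For the reverse inclusion I would argue contrapositively. Suppose $x \in C_d$ but $\langle x \rangle \ne C_d$. Then $\langle x \rangle$ is a proper subgroup of the cyclic group $C_d$, so its order is a proper divisor of $d$, and therefore divides $d/p$ for some prime $p \mid d$. By the uniqueness of subgroups of each order in a cyclic group, $\langle x \rangle \le H_{d/p}$, so $x \in H_{d/p}$ and thus $x \notin \mathrm{New}([(C,d)])$. Combining the two inclusions gives the set equality $\mathrm{New}([(C,d)]) = \{\,x \in G : \langle x \rangle = C_d\,\}$.

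The cardinality statement then follows at once: the generators of a cyclic group of order $d$ number exactly $\varphi(d)$, as recalled in Section~2, whence $|\mathrm{New}([(C,d)])| = \varphi(d)$. I do not expect a serious obstacle in this lemma, since the content is genuinely a fact about cyclic groups. The one point deserving care is the identification of $\mathrm{Pred}(C_d)$ with the \emph{complete} family of maximal proper subgroups of $C_d$, so that the removed union truly accounts for every non-generator; this is guaranteed because $\mathcal{L}_c(G)$ contains all cyclic subgroups of $G$, in particular all subgroups of the cyclic group $C_d$, so no maximal proper subgroup of $C_d$ is missing from the cover relation.
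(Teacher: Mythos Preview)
Your proof is correct and follows essentially the same approach as the paper: both identify $\mathrm{Pred}(C_d)$ with the family $\{H_{d/p}:p\mid d\}$ of maximal proper subgroups of $C_d$, observe that their union consists exactly of the non-generators of $C_d$, and conclude that $\mathrm{New}([(C,d)])$ is the generator set of cardinality $\varphi(d)$. Your version is slightly more explicit in justifying why $\mathrm{Pred}(C_d)$ computed in $\mathcal{L}_c(G)$ coincides with the internal maximal proper subgroups of $C_d$, but the argument is otherwise the same.
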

\begin{proof}
The predecessors of $C_d$ in $\mathcal{L}_c(G)$ are precisely its maximal proper cyclic
subgroups $H_{d/p}$ with $p\mid d$.  
Every proper subgroup of $C_d$ is contained in one of these $H_{d/p}$.  
Thus
\[
\bigcup_{E\in \mathrm{Pred}(C_d)} E \;=\; 
\bigcup_{p\mid d} C_{d/p}
\;=\; \{\,x\in C_d : |x| < d\,\}.
\]
Therefore
\[
\mathrm{New}([(C,d)]) \;=\; 
C_d \setminus \bigcup_{p\mid d} C_{d/p}
\;=\;
\{\,x\in C_d : |x| = d\,\},
\]
which are exactly the generators of \(C_d\), and therefore
\(|\mathrm{New}([(C,d)])|=\varphi(d)\).
\end{proof}
Now we are ready to label the $EPow(G)$ obtained from $\mathcal{L}_c(G)$ as the following theorem. 
  
   \begin{theorem}\label{thm:canonical-labels}
Every vertex \(x\in \mathrm{EPow}(G)\) admits a canonical label determined by its cyclic subgroup
and a generator index:
\[
x \ \longmapsto\ (C_d,i),
\]
where \(\langle x\rangle = C_d\) is the unique cyclic subgroup generated by \(x\) of order \(d\), and
\(1 \le i \le \varphi(d)\).
\end{theorem}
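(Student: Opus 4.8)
The plan is to read the labels directly off the block decomposition of the vertex set already supplied by Lemma~\ref{prop:new-is-generators-global}. First I would note that each element $x\in G=V(\mathrm{EPow}(G))$ generates a unique cyclic subgroup $\langle x\rangle$; writing $d=|\langle x\rangle|$, Theorem~\ref{th1} identifies $\langle x\rangle$ with a unique class $[(C,d)]\in V$. Hence the assignment $x\mapsto C_d$ is forced by the reconstructed lattice $\mathcal{L}_c(G)$ alone and requires no reference to the group operation. This produces the first coordinate of the label canonically.

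Next I would verify that the family $\{\mathrm{New}([(C,d)]):[(C,d)]\in V\}$ is a partition of $G$. By Lemma~\ref{prop:new-is-generators-global}, $\mathrm{New}([(C,d)])$ is precisely the set of generators of $C_d$, i.e.\ those $x$ with $\langle x\rangle=C_d$, and it has cardinality $\varphi(d)$. Since every $x$ generates one and only one cyclic subgroup, these blocks are pairwise disjoint and their union is all of $G$. Thus $x$ lies in the single block $\mathrm{New}([(C,d)])$ with $C_d=\langle x\rangle$, which both pins down the first coordinate and fixes the range $1\le i\le\varphi(d)$ of the second.

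To produce the second coordinate I would fix, once and for all, an enumeration $\mathrm{New}([(C,d)])=\{g_1,\dots,g_{\varphi(d)}\}$ of each block and declare the generator index of $x$ to be the position $i$ it occupies. The resulting map $x\mapsto(C_d,i)$ is then a bijection from $G$ onto $\{(C_d,i):[(C,d)]\in V,\ 1\le i\le\varphi(d)\}$, since on each block it restricts to a bijection onto $\{C_d\}\times\{1,\dots,\varphi(d)\}$ and the blocks partition $G$.

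The one point deserving care, and the only genuine subtlety, is the sense in which the label is \emph{canonical}. The first coordinate $C_d$ is intrinsic, being determined by $\mathcal{L}_c(G)$ alone. The within-block index $i$, by contrast, cannot be distinguished intrinsically: all generators of $C_d$ are closed twins in $\mathrm{EPow}(G)$, sharing the common closed neighbourhood $\bigcup_{\,C_d\subseteq E\,}E$ taken over cyclic $E\supseteq C_d$, and are therefore interchangeable by graph automorphisms that fix the lattice. Consequently $i$ is well defined only up to this twin symmetry, and any fixed rule---for instance the order in which the new vertices are created when $K_C$ is glued in during the construction of Theorem~\ref{thm:bottom_up}---serves as a legitimate canonical convention. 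With the partition and a fixed per-block enumeration in hand, the labeling is complete.
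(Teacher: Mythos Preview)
Your proposal is correct and follows essentially the same route as the paper: both use Lemma~\ref{prop:new-is-generators-global} to identify the blocks $\mathrm{New}([(C,d)])$ of size $\varphi(d)$, fix an enumeration within each block, and conclude that $x\mapsto(C_d,i)$ is a bijection. Your explicit partition argument and your discussion of the twin symmetry (clarifying that the index $i$ is canonical only up to a fixed convention) are welcome additions that the paper leaves implicit.
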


\begin{proof}
Define a map
\[
f : G \longrightarrow \{(C_d,i) : C_d\in\mathcal{L}_c(G),\, 1\le i \le \varphi(d)\}
\]
as follows: for each \(x\in G\), let \(\langle x\rangle = C_d\).  
Since \(C_d\) is cyclic of order \(d\), it has exactly \(\varphi(d)\) generators.  
Fix an ordering of the generators of \(C_d\) as
\[
\mathrm{New}([(C,d)])=\{g_1,g_2,\dots,g_{\varphi(d)}\},
\]
where \(\mathrm{New}([(C,d)])\) denotes the set of elements of order \(d\) in \(C_d\).
Then define \(f(x)=(C_d,i)\), where \(x=g_i\).

The map \(f\) is well-defined because each vertex \(x\in G\) generates a unique cyclic subgroup \(C_d\), 
and appears exactly at the stage when \(C_d\) is introduced in the reconstruction of \(\mathrm{EPow}(G)\) (proof of Theorem~\ref{thm:bottom_up} part (b)).
Moreover, by Lemma~\ref{prop:new-is-generators-global},
\[
|\mathrm{New}([(C,d)])|=\varphi(d),
\]
so each generator \(g_i\) corresponds to exactly one index \(i\).

The map is injective: if \(f(x)=f(y)\), then both \(x\) and \(y\) are the same \(i\)-th generator of the same subgroup \(C_d\), hence \(x=y\).
The map is surjective: given any pair \((C_d,i)\), the element \(g_i\in \mathrm{New}([(C,d)])\) satisfies \(f(g_i)=(C_d,i)\).
Therefore, \(f\) is a bijection, which establishes the stated canonical labeling of the vertices of \(\mathrm{EPow}(G)\).
\end{proof}

\begin{remark}
Because the identification $(C,d)\sim (C',d)$ is performed before any labelling,
no subgroup is represented more than once, and no generator can receive conflicting labels.
The labelling procedure therefore depends only on the structure of $\mathcal{L}_c(G)$,
not on any choice of maximal cyclic subgroup containing $C_d$.
\end{remark}

\begin{example}\label{ex3}
    Recall that in Example~\ref{ex1} we began with unlabeled $\mathrm{EPow}(G)$ and reconstructed the $\mathcal{L}_c(G)$.
Then, in Example~\ref{ex2}, we started from $\mathcal{L}_c(G)$ and reconstructed unlabeled $\mathrm{EPow}(G)$.
We now proceed to assign labels to the   $\mathrm{EPow}(G)$ obtained in Example~\ref{ex2}, by method in Theorem \ref{thm:canonical-labels}.
\begin{enumerate}
    \item \text{Stage \(\mathcal{L}^{(0)}\):}  
    For the trivial subgroup \(\{e\}\), we assign the label \((C_1,1)\).

    \item \text{Stage \(\mathcal{L}^{(1)}\):}  
    The three subgroups of order \(2\), denoted \(C_{2(1)}, C_{2(2)}, C_{2(3)}\), each have a single generator.  
    Thus, their labels are
    \[
    (C_{2(1)},1),\quad (C_{2(2)},1),\quad (C_{2(3)},1).
    \]
    The subgroup of order \(3\), denoted \(C_3\), has two generators.  
    We label them
    \[
    (C_3,1),\quad (C_3,2).
    \]

    \item Stage \(\mathcal{L}^{(2)}\):
    Each subgroup of order \(6\) has \(\varphi(6)=2\) generators.  
    Therefore  their generator labels are :
    \[
    C_{6(1)}:\ (C_{6(1)},1),\ (C_{6(1)},2),
    \]
    \[
    C_{6(2)}:\ (C_{6(2)},1),\ (C_{6(2)},2),
    \]
    \[
    C_{6(3)}:\ (C_{6(3)},1),\ (C_{6(3)},2).
    \]
\end{enumerate}
\end{example}

\section{Reconstructing the power graph and directed power graph from $\mathcal{L}_c(G)$}

Recall that each node of $\mathcal{L}_c(G)$ is an equivalence class $[(C,d)]$
representing the unique cyclic subgroup $C_d\le G$ of order $d$, and that
\[
\mathrm{New}([(C,d)])
\;=\;
\{x\in C_d:\ |x|=d\}
\]
is the set of generators of $C_d$. The  proof of restructuring power graph algorithm is almost similar to argument of Theorem \ref{thm:bottom_up} where we present it as following. 

\begin{theorem}\label{thm:power-bottom-up}
The undirected power graph $\mathrm{Pow}(G)$ can be reconstructed 
from the cyclic subgroup lattice $\mathcal{L}_c(G)$.
\end{theorem}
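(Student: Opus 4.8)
The plan is to mirror the level-by-level gluing argument of Theorem~\ref{thm:bottom_up}, changing only the rule by which edges are inserted when the generators of a cyclic subgroup are attached. I would again traverse the Hasse diagram $\mathcal{L}_c(G)$ upward from $\{e\}$, reusing the canonical labelling of Theorem~\ref{thm:canonical-labels} so that the vertex set is fixed in advance: each node $[(C,d)]$ contributes exactly the $\varphi(d)$ generator-vertices of $\mathrm{New}([(C,d)])$ guaranteed by Lemma~\ref{prop:new-is-generators-global}. The vertex set produced is therefore $G$ by the same counting as before, and the only thing requiring a fresh argument is the edge set.

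The essential difference from the enhanced power graph is that, within a single cyclic subgroup $C_d$, the power graph does \emph{not} induce a complete subgraph: two elements $x,y\in C_d$ are joined in $\mathrm{Pow}(G)$ precisely when $\langle x\rangle\subseteq\langle y\rangle$ or $\langle y\rangle\subseteq\langle x\rangle$, that is, when their orders are comparable under divisibility. Accordingly, when I process the node $[(C,d)]$ I would \emph{not} insert the full clique $K_{C_d}$; instead I would insert only (i) all edges among the new generators in $\mathrm{New}([(C,d)])$, and (ii) one edge from each new generator to every vertex already lying in a proper cyclic subgroup of $C_d$. By Lemma~\ref{prop:new-is-generators-global} the union $\bigcup_{E\in\mathrm{Pred}(C_d)}E$ of the immediate predecessors equals $\{z\in C_d:|z|<d\}$, so step (ii) can be carried out using only data already present in $\mathcal{L}_c(G)$ and already glued in at earlier stages.

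To finish I would verify the two inclusions of edge sets. For soundness, every inserted edge is a power-graph edge: a new generator $x$ of $C_d$ satisfies $\langle x\rangle=C_d$, so any $z$ in a proper subgroup of $C_d$ lies in $\langle x\rangle$ and is a power of $x$, while two generators of $C_d$ generate the same subgroup and hence are powers of each other. For completeness, take any edge $xy$ of $\mathrm{Pow}(G)$; then without loss of generality $\langle x\rangle\subseteq\langle y\rangle$, and at the stage where $\langle y\rangle=C_d$ is processed the element $x$ is either a fellow generator of $C_d$ (handled by (i)) or lies in a proper subgroup of $C_d$ (handled by (ii)), so $xy$ is inserted. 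Since each power-graph edge is attached exactly once, at the stage of the larger of the two cyclic subgroups, and no spurious edges are ever created, the constructed graph coincides with $\mathrm{Pow}(G)$.

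The step I expect to be the main obstacle is not any single inclusion but the book-keeping showing that the restricted gluing rule neither omits nor introduces ``downward'' edges: one must confirm that joining each new generator to the union of the immediate predecessors, rather than to every proper subgroup individually, already captures exactly the comparable pairs, which is precisely the content of Lemma~\ref{prop:new-is-generators-global}. The contrast with Theorem~\ref{thm:bottom_up}, where inserting a full clique was safe precisely because $\mathrm{EPow}(G)$ is complete on each cyclic subgroup, is what makes this the delicate part of the argument, since here incomparable elements of a common cyclic overgroup must be kept non-adjacent.
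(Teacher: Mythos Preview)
Your proposal is correct and follows essentially the same approach as the paper: process $\mathcal{L}_c(G)$ bottom-up, and when the node $[(C,d)]$ is reached insert a clique on $\mathrm{New}([(C,d)])$ together with edges from each new generator down to every already-processed element of $C_d$, then verify both inclusions of edge sets exactly as you describe. The only cosmetic difference is that the paper phrases rule~(ii) as ``connect to every already-processed subgroup $E\le C_d$'' while you phrase it as ``connect to the union of the immediate predecessors''; these are the same set of vertices by Lemma~\ref{prop:new-is-generators-global}, which you correctly identify as the key ingredient.
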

\begin{proof}
Using the following inductive construction, we build a graph $\Gamma$, and then we show that $\Gamma \cong \mathrm{Pow}(G)$.

\begin{enumerate}
\item Begin with the class $[(\{e\},1)]$, which corresponds to the first vertex in the graph $\Gamma$.

\item Suppose that every class $[(E,e)]$  in $\mathcal{L}_c(G)$ until level $\mathcal{L}^{(t)}$ has already been processed. Let $C_d \in \mathcal{L}^{(t+1)}$ and $\mathrm{Pred}(C_d)=\{E_1,\dots,E_r\}$ be the immediate predecessors of $C_d$ in $\mathcal{L}_c(G)$
and set $\mathrm{New}([(C,d)])$. Insert edges according to the following two rules:

\begin{itemize}
  \item[(E1)] Make a clique $K_{\varphi(d)}$, connect every two vertices in $\mathrm{New}([(C,d)])$.
  \item[(E2)] For every subgroup $E\le C_d$ that is already processed
              (in particular each $E_i\in \mathrm{Pred}(C_d)$, and any lower subgroup), connect every vertex of $\mathrm{New}([(C,d)])$ to every vertex  of $E$.
\end{itemize}
Mark $[(C,d)]$ as processed and continue until all classes are processed.
\end{enumerate}
We now show that the resulting undirected graph $\Gamma$ is exactly the power graph $\mathrm{Pow}(G)$.

 Let $xy$ be an edge in $\Gamma$, $x\in \mathrm{New}([(C,d)])$ and  $y\in E\le C_d$ be any already processed vertex.
Then $\langle y\rangle\le \langle x\rangle$, so $\{x,y\}$ must be an edge in $\mathrm{Pow}(G)$; this is exactly the edge inserted by rule (E2). If $x,x'\in \mathrm{New}([(C,d)])$, then $\langle x\rangle=\langle x'\rangle=C_d$, hence $x,x'$ are adjacent in $\mathrm{Pow}(G)$; this is precisely the adjacency introduced by rule (E1).
Finally, no additional edges are added by $(E_1)$ and $(E_2)$. In particular, if $y\in E_i$ and $z\in E_j$ for two distinct predecessors
$E_i\neq E_j$, then $E_i$ and $E_j$ are incomparable in $\mathcal{L}_c(G)$, so $\langle y\rangle$ and
$\langle z\rangle$ are incomparable and $\{y,z\}$ is not an edge of $\mathrm{Pow}(G)$; Our construction adds no such edge.

Now, take any edge $\{u,v\}$ of $\mathrm{Pow}(G)$. Then $\langle u\rangle\subseteq \langle v\rangle$
or the reverse. Without loss, $\langle u\rangle\le \langle v\rangle$. Let $[(C,d)]$ be the class of $\langle v\rangle$. When $[(C,d)]$ is processed, $v\in \mathrm{New}([(C,d)])$,
and $u$ lies in a subgroup $E\le C_d$ that has already been processed (all proper subgroups are processed earlier).
Hence (E2) inserts the edge $\{u,v\}$. If $\langle u\rangle=\langle v\rangle$, both are in the same $\mathrm{New}([(C,d)])$ and (E1) inserts the edge.
Hence all edges of $\mathrm{Pow}(G)$ appear in $\Gamma$, and consequently $\Gamma = \mathrm{Pow}(G)$.
\end{proof}

\begin{remark}
In the enhanced power graph, when a cyclic subgroup $C_d$ covers several $E_i$, one glues the complete
clique on $C_d$ along all the $E_i$ simultaneously.
If we were to do this in constructing the
power graph, it would incorrectly create edges between generators that lie in distinct, incomparable subgroups $E_i$.
 The rule (E2) prevents such extra edges, it connects the new generators only to vertices in
lower (comparable) subgroups, never across incomparable predecessors. Consequently, a cyclic subgroup induces a
complete subgraph in $\mathrm{Pow}(G)$ if and only if $d$ is a prime power (that is, when its subgroup lattice is a chain).
\end{remark}

\medskip

\noindent

We record the parallel directed construction.

\begin{theorem}\label{thm:dir-power-bottom-up}
 The directed power graph $\overrightarrow{Pow}(G)$ can be reconstructed 
from the cyclic subgroup lattice $\mathcal{L}_c(G)$.
\end{theorem}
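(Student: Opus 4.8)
The plan is to mirror the bottom-up construction of Theorem~\ref{thm:power-bottom-up}, adding an orientation to each arc as it is inserted. The guiding principle is the arithmetic characterization of arcs: in $\overrightarrow{\mathrm{Pow}}(G)$ there is an arc $x\to y$ between distinct vertices precisely when $y=x^m$ for some integer $m$, which is equivalent to the subgroup inclusion $\langle y\rangle\subseteq\langle x\rangle$. Since $\mathcal{L}_c(G)$ records exactly these inclusions, and since by Theorem~\ref{thm:canonical-labels} each vertex carries a canonical label $(C_d,i)$ that identifies its generating subgroup $C_d$ together with its order $d$, every arc is determined by lattice data alone, with no reference to the group operation.

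Concretely, I would reuse the stages $\mathcal{L}^{(t)}$ and the generator sets $\mathrm{New}([(C,d)])$ furnished by Lemma~\ref{prop:new-is-generators-global}, replacing the two edge-insertion rules of Theorem~\ref{thm:power-bottom-up} by oriented analogues. When processing $C_d$: rule (E1$'$) inserts, for every pair $x,x'\in \mathrm{New}([(C,d)])$, both arcs $x\to x'$ and $x'\to x$, since $\langle x\rangle=\langle x'\rangle=C_d$ makes each element a power of the other; rule (E2$'$) inserts, for every already-processed vertex $y\in E$ with $E\le C_d$, the single arc $x\to y$ for each $x\in\mathrm{New}([(C,d)])$, reflecting $\langle y\rangle\le E\le C_d=\langle x\rangle$ and hence $y\in\langle x\rangle$.

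The verification then runs in two directions, exactly as in Theorem~\ref{thm:power-bottom-up}. For soundness, every arc inserted by (E1$'$) or (E2$'$) satisfies the inclusion $\langle\,\text{head}\,\rangle\subseteq\langle\,\text{tail}\,\rangle$, so it is a genuine arc of $\overrightarrow{\mathrm{Pow}}(G)$. For completeness, given an arc $u\to v$ one has $\langle v\rangle\subseteq\langle u\rangle$; if the inclusion is an equality then $u$ and $v$ lie in the same class $\mathrm{New}([(C,d)])$ and (E1$'$) supplies $u\to v$, while if it is strict then $v$ belongs to a proper subgroup $E\le\langle u\rangle$ processed at an earlier stage, so (E2$'$) supplies $u\to v$ when $\langle u\rangle$ is processed. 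The vertex-set identity $V(\Gamma)=G$ is inherited verbatim from Theorem~\ref{thm:bottom_up}, since every $g\in G$ enters the construction when $\langle g\rangle$ is processed.

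The step requiring the most care is confirming that no spurious arcs arise between elements generating incomparable subgroups, and that the bidirectional arcs are confined to a single $\mathrm{New}$ class. The first follows from the same incomparability observation used in the remark after Theorem~\ref{thm:power-bottom-up}: if $y\in E_i$ and $z\in E_j$ for distinct predecessors $E_i\neq E_j$ of $C_d$, then $E_i,E_j$ are incomparable in $\mathcal{L}_c(G)$, so neither $\langle y\rangle\subseteq\langle z\rangle$ nor the reverse holds, and (E2$'$) connects new generators only downward to comparable subgroups, never across predecessors. The second is immediate once the identification $(C,d)\sim(C',d)$ of Theorem~\ref{th1} is in force, so that each subgroup---and hence each set of mutually bidirectional generators---is represented exactly once, ruling out conflicting orientations. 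I expect this bookkeeping of directions, rather than any genuinely new difficulty, to be the only real obstacle, the argument otherwise being a direct oriented refinement of Theorem~\ref{thm:power-bottom-up}.
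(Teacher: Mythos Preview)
Your proposal is correct and follows essentially the same approach as the paper: the paper replaces (E1)--(E2) by the directed rules (D1) and (D2), which are identical to your (E1$'$) and (E2$'$), inserting bidirectional arcs within each $\mathrm{New}([(C,d)])$ and single downward arcs from new generators to already-processed subgroups. Your verification is somewhat more detailed than the paper's terse justification, but the construction and the soundness/completeness argument match the paper's exactly.
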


\begin{proof}
With the same notation as in Theorem \ref{thm:power-bottom-up}, replacing (E1)–(E2) with:

\begin{itemize}
  \item[(D1)] {Inside $\mathrm{New}([(C,d)])$:} for any $x,x'\in \mathrm{New}([(C,d)])$, add both arcs $x\to x'$ and $x'\to x$.
  \item[(D2)] {Downward orientation:} for every $E\le C_d$ already processed and every
              $x\in \mathrm{New}([(C,d)])$, $y\in E$, add the arc $x\to y$.
\end{itemize}
No arcs are added between vertices belonging to incomparable classes.
The resulting digraph is the directed power graph.
If $x\in \mathrm{New}([(C,d)])$ and $y\in E\le C_d$, then $\langle y\rangle\subseteq \langle x\rangle$,
so $y=x^m$ for some $m\ge 2$, hence $x\to y$ is a directed power edge (inserted by (D2)).
If $x,x'\in \mathrm{New}([(C,d)])$, both generate $C_d$ and are mutual powers, giving the bidirected pair by (D1).
No arc is added across incomparable classes, matching the definition.
\end{proof}


    \begin{example}\label{ex4}
The vertex labels in the directed power graph coincide with those in the enhanced power graph.  
In the first stage, we place the identity element.  
In the second stage, as in Figure~\ref{fig4}, we have the vertices corresponding to the elements of order $2$ and order $3$, with directed edges from each vertex in $(C_{2(i)},1)$ and $(C_{3},j)$ to the identity where $1\leq i \leq 3$ and $1\leq j \leq 2$.  

In the next stage, the vertices corresponding to the elements of order $6$ appear, and we draw directed edges from each $(C_{6(i)},j)$ to the corresponding subgroups $(C_{2(i)},1)$ and $(C_{3},j)$ contained in it.  
The final diagram obtained in this construction is the directed power graph, as illustrated in Figure~\ref{fig4}.

It is important to note that in this case there are no directed edges from $(C_{3},j)$ to $(C_{2(i)},1)$, which distinguishes the directed power graph from the enhanced power graph.
\end{example}

\tikzset{
  midarrow/.style={
    postaction={decorate},
    decoration={markings, mark=at position 0.5 with {\arrow{>}}}
  },
  dot/.style={circle,fill=black,inner sep=1.9pt}
}

\begin{figure}
\label{fig4}
\begin{center}
\begin{tikzpicture}[scale=1.1]

  \coordinate (x0) at (0.00, 0.00);
  \coordinate (x1) at (-.6, 1.3);
  \coordinate (x2) at (.9, 1.3);
  \coordinate (c1) at (-3, 2.5);
  \coordinate (c2) at (-2, 2.5);
  \coordinate (d1) at (-.6, 2.5);
  \coordinate (d2) at (.9, 2.5);
  \coordinate (e1) at (2.5, 2.5);
  \coordinate (e2) at (3.5, 2.5);
  \coordinate (x3) at (-2,1);
  \coordinate (x4) at (2,1);
  \coordinate (x5) at (3,1);

  \draw[midarrow] (x1) -- (x0);
  \draw[midarrow] (x2) -- (x0);
  \draw[midarrow] (x3) -- (x0);
  \draw[midarrow] (x4) -- (x0);
  \draw[midarrow] (x5) -- (x0);

  \draw[midarrow] (x1) -- (x2);
  \draw[midarrow] (x2) -- (x1);

  \foreach \a in {x0,x1,x2,x3} {
    \draw[midarrow] (c1) -- (\a);
    \draw[midarrow] (c2) -- (\a);
  }
    \draw[midarrow] (c1) -- (c2);
  \draw[midarrow] (c2) -- (c1);

  \foreach \a in {x0,x1,x2,x4} {
    \draw[midarrow] (d1) -- (\a);
    \draw[midarrow] (d2) -- (\a);
  }
  
  \draw[midarrow] (d1) -- (d2);
  \draw[midarrow] (d2) -- (d1);

  \foreach \a in {x0,x1,x2,x5} {
    \draw[midarrow] (e1) -- (\a);
    \draw[midarrow] (e2) -- (\a);
  }
  
  \draw[midarrow] (e1) -- (e2);
  \draw[midarrow] (e2) -- (e1);

  \foreach \v in {x0,x1,x2,x3,x4,x5,c1,c2,d1,d2,e1,e2}
     \node[dot] at (\v) {};

\end{tikzpicture}
\caption{The directed power graph}
\end{center}
\end{figure}
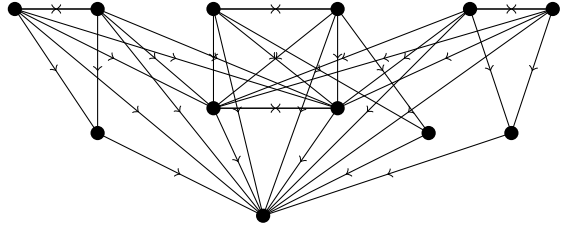

\begin{corollary}
\label{cor:diff}
The difference graph $\mathrm{D}(G)$ can be reconstructed 
from the cyclic subgroup lattice $\mathcal{L}_c(G)$.
\end{corollary}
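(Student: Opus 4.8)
The plan is to observe that this corollary follows almost immediately from the two preceding reconstruction theorems, once a common labelling of the vertices is fixed. First I would invoke Theorem~\ref{thm:bottom_up} to produce the enhanced power graph $\mathrm{EPow}(G)$ and Theorem~\ref{thm:power-bottom-up} to produce the power graph $\mathrm{Pow}(G)$, both from the same input $\mathcal{L}_c(G)$. The crucial point is that both reconstructions assign vertices the same canonical labels $(C_d,i)$ guaranteed by Theorem~\ref{thm:canonical-labels}: each element of $G$ is identified with a pair consisting of its generated cyclic subgroup $C_d$ and a generator index $1\le i\le\varphi(d)$. Since this labelling depends only on $\mathcal{L}_c(G)$ and not on the particular reconstruction procedure, the two graphs $\mathrm{EPow}(G)$ and $\mathrm{Pow}(G)$ are realized on a common labeled vertex set.

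With the vertex sets identified, the edge sets $E(\mathrm{EPow}(G))$ and $E(\mathrm{Pow}(G))$ become directly comparable, and I would form the set difference $E(\mathrm{EPow}(G))\setminus E(\mathrm{Pow}(G))$. Because $E(\mathrm{Pow}(G))\subseteq E(\mathrm{EPow}(G))$ for every finite group, as recorded in Section~2, this difference is exactly the set of enhanced-power edges that fail to be power edges. Finally I would delete all vertices incident to no edge of this difference. By the defining equation $E(\mathrm{D}(G))=E(\mathrm{EPow}(G))\setminus E(\mathrm{Pow}(G))$ together with the removal of isolated vertices, the resulting graph is precisely $\mathrm{D}(G)$, and the entire construction used only $\mathcal{L}_c(G)$.

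The proof carries essentially no technical obstacle beyond bookkeeping of the common labelling; the substance was already done in Theorems~\ref{thm:bottom_up} and~\ref{thm:power-bottom-up}. The one point I would verify carefully is the consistency of the two vertex labellings, namely that a vertex labelled $(C_d,i)$ in the enhanced power graph reconstruction coincides with the vertex labelled $(C_d,i)$ in the power graph reconstruction. This is exactly what Theorem~\ref{thm:canonical-labels} supplies: the labelling is canonical and intrinsic to $\mathcal{L}_c(G)$, so no reconciliation step between the two graphs is needed and the edge difference is well defined.
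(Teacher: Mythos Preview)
Your proposal is correct and follows essentially the same approach as the paper: invoke the reconstruction theorems for $\mathrm{EPow}(G)$ and $\mathrm{Pow}(G)$ and then take the edge set difference. You are in fact a bit more careful than the paper in making explicit the common canonical labelling via Theorem~\ref{thm:canonical-labels} and the removal of isolated vertices; the paper adds only the equivalent intrinsic description that $\{x,y\}\in E(\mathrm{D}(G))$ iff $x,y$ lie in a common cyclic subgroup while $\langle x\rangle$ and $\langle y\rangle$ are incomparable in $\mathcal{L}_c(G)$.
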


\begin{proof}
By the reconstruction theorems established above, both $\mathrm{EPow}(G)$ and
$\mathrm{Pow}(G)$ are obtained  from $\mathcal{L}_c(G)$ (no reference
to elements is required). Hence their edge sets are determined by
$\mathcal{L}_c(G)$, and so is the set-theoretic difference
$E(\mathrm{EPow}(G))\setminus E(\mathrm{Pow}(G))$.

Equivalently, using only $\mathcal{L}_c(G)$, an unordered pair $\{x,y\}$ lies in
$E(\mathrm{D}(G))$ iff there exists a cyclic subgroup $C$ with $\{x,y\}\subseteq C$
(\,detectable in $\mathcal{L}_c(G)$\,) and the nodes corresponding to
$\langle x\rangle$ and $\langle y\rangle$ are incomparable in the chain
of subgroups of $C$ (so $\{x,y\}$ is an EPow-edge but not a Pow-edge).
Thus $\mathrm{D}(G)$ depends only on $\mathcal{L}_c(G)$.
\end{proof}

\noindent\textbf{Proof of the Main Theorem.} 
Theorems~\ref{th1},~\ref{thm:bottom_up}, and~\ref{thm:canonical-labels} together establish the mutual reconstruction 
between the enhanced power graph $\mathrm{EPow}(G)$ and the cyclic subgroup lattice $\mathcal{L}_c(G)$ of a finite group $G$.  
Furthermore, Theorems~\ref{thm:power-bottom-up} and~\ref{thm:dir-power-bottom-up} demonstrate that 
both the power graph and the directed power graph of $G$ 
can also be reconstructed directly from the cyclic subgroup lattice.  Corollary~\ref{cor:diff} proved the construction of a difference graph from $\mathcal{L}_c(G)$.
This completes the proof of the Main Theorem.

\section{The Isomorphism Problem for Power-Type Graphs}

It is well known that isomorphic power-type graphs do not necessarily correspond to isomorphic groups. 
For example, among the $14$ groups of order $16$, there exist only $12$ distinct power graphs. 
  We say that a group $G_1$ is \emph{determined  by its power graph} if, for every group $G_2$ such that
$\operatorname{Pow}(G_1) \cong \operatorname{Pow}(G_2)$ then $G_1 \cong G_2$. Thus, the classical question may be stated as follows:
 Which groups are determined by their power-type graphs? 

We denote by $\mathcal{L}(G)$ the (complete) subgroup lattice of a group. A group $G$ is said to be determined by its subgroup lattice if it is isomorphic to every group $H$ such that $\mathcal{L}(G)\cong \mathcal{L}(H)$. We also say that $G$ and $G'$ are $\mathcal{L}$-isomorphic if $\mathcal{L}(G)\cong \mathcal{L}(G')$. Similarly, for the sublattice \(\mathcal{L}_c(G)\), we use the terms "determined by its cyclic subgroup lattice" and "\(\mathcal{L}_c\)-isomorphic". From our main theorem, we have:

\begin{corollary}\label{th:hasse_equiv}
A group is determined by its power-type graph if and only if it is determined by its lattice of cyclic subgroups.
\end{corollary}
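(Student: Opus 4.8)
The plan is to collapse the corollary onto a single biconditional at the level of isomorphism types, namely that for finite groups $G_1,G_2$,
\[
\mathcal{L}_c(G_1) \cong \mathcal{L}_c(G_2) \quad\Longleftrightarrow\quad \mathrm{EPow}(G_1) \cong \mathrm{EPow}(G_2),
\]
and then to run a routine logical argument through this equivalence. Once the biconditional is available, the two implications of the corollary become immediate, so the genuine content is concentrated in verifying the biconditional and afterwards in extending it from the enhanced power graph to the remaining power-type graphs.

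First I would establish the biconditional, the point being that the reconstructions of Section 3 are \emph{canonical}: they take isomorphic inputs to isomorphic outputs because they invoke only intrinsic combinatorial data. For the direction $\mathrm{EPow}(G_1)\cong\mathrm{EPow}(G_2)\Rightarrow\mathcal{L}_c(G_1)\cong\mathcal{L}_c(G_2)$, I would observe that the construction of Theorem~\ref{th1} builds $\mathcal{L}_c(G)$ from the adjacency structure alone, via maximal cliques, local divisor posets, and the intersection-based identifications. A graph isomorphism $\mathrm{EPow}(G_1)\to\mathrm{EPow}(G_2)$ sends maximal cliques to maximal cliques and preserves their pairwise intersection sizes, hence carries the reconstructed poset $(V,\prec)$ of $G_1$ isomorphically onto that of $G_2$; by Theorem~\ref{th1} these are the two cyclic subgroup lattices. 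For the converse direction, the assembly of Theorem~\ref{thm:bottom_up} together with the canonical labeling of Theorem~\ref{thm:canonical-labels} produces $\mathrm{EPow}(G)$ from the abstract Hasse diagram, the vertices $(C_d,i)$ and the glued cliques being dictated solely by the cover relations and the values $\varphi(d)$. A lattice isomorphism therefore induces an adjacency-preserving bijection on labels, i.e.\ a graph isomorphism.

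With the biconditional in place, the corollary follows formally. If $G$ is determined by its enhanced power graph and $H$ satisfies $\mathcal{L}_c(G)\cong\mathcal{L}_c(H)$, then the biconditional gives $\mathrm{EPow}(G)\cong\mathrm{EPow}(H)$, whence $G\cong H$; thus $G$ is determined by $\mathcal{L}_c$. Conversely, if $G$ is determined by $\mathcal{L}_c$ and $H$ satisfies $\mathrm{EPow}(G)\cong\mathrm{EPow}(H)$, then $\mathcal{L}_c(G)\cong\mathcal{L}_c(H)$, whence $G\cong H$. This settles the equivalence for the enhanced power graph.

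Finally I would extend the conclusion to every power-type graph, and this is where the main obstacle lies. The forward half is free: since $\mathrm{Pow}(G)$, $\overrightarrow{\mathrm{Pow}}(G)$, and $\mathrm{D}(G)$ are all reconstructible from $\mathcal{L}_c(G)$ by Theorems~\ref{thm:power-bottom-up} and~\ref{thm:dir-power-bottom-up} and Corollary~\ref{cor:diff}, isomorphic lattices force each of these graphs to be isomorphic. The hard part is the reverse half, namely that each individual graph already pins down $\mathcal{L}_c(G)$, which Theorem~\ref{th1} supplies only for $\mathrm{EPow}$. For the directed power graph this is direct, since the closed out-neighborhood of a vertex $x$ equals $\langle x\rangle$, so the cyclic subgroups and their inclusions are read straight off the arcs; for the undirected power graph I would first recover $\overrightarrow{\mathrm{Pow}}(G)$ from $\mathrm{Pow}(G)$ by the arithmetical reconstruction of~\cite{bubboloni2025critical, das2023isomorphismproblempowergraphs} and then apply the out-neighborhood observation; and for the difference graph I would appeal to the analysis of~\cite{biswas2024difference}, the only case where the removal of isolated vertices genuinely requires care. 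Feeding each of these into the same logical template as above yields the full equivalence for all power-type graphs.
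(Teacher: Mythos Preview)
Your core strategy---reducing the corollary to the biconditional $\mathcal{L}_c(G_1)\cong\mathcal{L}_c(G_2)\Leftrightarrow\mathrm{EPow}(G_1)\cong\mathrm{EPow}(G_2)$ and then running the formal logical argument---is exactly what the paper intends by its one-line proof ``From our main theorem.'' For the enhanced power graph your argument is correct and simply makes explicit what Theorems~\ref{th1}, \ref{thm:bottom_up}, and~\ref{thm:canonical-labels} jointly supply. You also correctly notice something the paper glosses over at this point: the Main Theorem only gives $\mathcal{L}_c(G)\Rightarrow\mathrm{Pow}(G),\overrightarrow{\mathrm{Pow}}(G),\mathrm{D}(G)$, not the reverse, so the biconditional for those graphs needs an extra ingredient. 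Your fixes for the power graph and the directed power graph are sound (out-neighbourhoods for the directed case; the reconstruction of \cite{bubboloni2025critical,das2023isomorphismproblempowergraphs} for the undirected case), and this is precisely what the paper supplies one line later in Corollary~\ref{cor4} by invoking \cite{zahirovic2020study}. So for $\mathrm{EPow}$, $\mathrm{Pow}$, and $\overrightarrow{\mathrm{Pow}}$ your proof and the paper's are the same argument at different levels of detail.

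The difference graph, however, is a genuine gap---in your proof and in the corollary as literally stated. Your appeal to \cite{biswas2024difference} is a placeholder, and in fact no such argument can succeed: the implication $\mathrm{D}(G_1)\cong\mathrm{D}(G_2)\Rightarrow\mathcal{L}_c(G_1)\cong\mathcal{L}_c(G_2)$ is false. Every group in which all elements have prime-power order has $\mathrm{EPow}(G)=\mathrm{Pow}(G)$ and hence empty difference graph, yet such groups can have non-isomorphic cyclic subgroup lattices. Concretely, $S_3$ is determined by its (order-labelled) $\mathcal{L}_c$---and the paper lists dihedral groups among those determined by their power graph---but $\mathrm{D}(S_3)$ is empty, as it is for every EPP group, so $S_3$ is certainly not determined by $\mathrm{D}$. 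The paper tacitly acknowledges this by omitting $\mathrm{D}(G)$ from the list in Corollary~\ref{cor4}; the phrase ``power-type graph'' in Corollary~\ref{th:hasse_equiv} should be read as excluding the difference graph, and your proof should say so rather than gesture at \cite{biswas2024difference}.
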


 Moreover, when combined Corollary \ref{th:hasse_equiv}, by Corollary~7 in~\cite{zahirovic2020study}, we have: 
\begin{corollary}\label{cor4}
For groups $G_1$ and $G_2$, the following statements are equivalent:  
\begin{enumerate}
    \item the lattice of cyclic subgroups of $G_1$ and $G_2$  are isomorphic.
    \item the directed power graphs of $G_1$ and $G_2$ are isomorphic.
   \item the enhanced power graphs of $G_1$ and $G_2$ are isomorphic.
    \item the power graphs of $G_1$ and $G_2$ are isomorphic.
\end{enumerate}
\end{corollary}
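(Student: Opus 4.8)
The plan is to prove the four conditions equivalent by separating the implications \emph{out of} (1) --- which are delivered directly by the reconstruction theorems of Sections 3 and 4 --- from the implications \emph{back into} (1), for which I would combine Theorem~\ref{th1} with the known equivalences among the three power-type graphs recorded in \cite{zahirovic2020study} and, for the power-to-directed step, in \cite{bubboloni2025critical, das2023isomorphismproblempowergraphs, cameron2022}.

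First I would record that every construction in Theorems~\ref{thm:bottom_up}, \ref{thm:canonical-labels}, \ref{thm:power-bottom-up}, and~\ref{thm:dir-power-bottom-up} reads off only intrinsic data from $\mathcal{L}_c(G)$: the cover relations, the orders $d$ (hence the generator counts $\varphi(d)$), and the gluing pattern dictated by $\mathrm{Pred}(C_d)$. Consequently an order-isomorphism $\phi\colon \mathcal{L}_c(G_1)\to \mathcal{L}_c(G_2)$ carries each recipe for $G_1$ onto the corresponding recipe for $G_2$, and therefore induces isomorphisms of the reconstructed graphs; this yields (1)$\,\Rightarrow\,$(2), (1)$\,\Rightarrow\,$(3), and (1)$\,\Rightarrow\,$(4) simultaneously. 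The one point to verify is well-definedness up to isomorphism: the labelling in Theorem~\ref{thm:canonical-labels} fixes an arbitrary ordering of the $\varphi(d)$ generators inside each class $\mathrm{New}([(C,d)])$, so I would note that any two such orderings differ by a permutation within each $\mathrm{New}([(C,d)])$ and hence produce isomorphic graphs, making the isomorphism type a genuine invariant of $\mathcal{L}_c(G)$.

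For the converse, Theorem~\ref{th1} already supplies (3)$\,\Rightarrow\,$(1): its reconstruction of $\mathcal{L}_c(G)$ uses only the adjacency structure of $\mathrm{EPow}(G)$ (maximal cliques, their pairwise intersection sizes, and the induced local divisor posets), so $\mathrm{EPow}(G_1)\cong \mathrm{EPow}(G_2)$ transports this data to an isomorphism $\mathcal{L}_c(G_1)\cong \mathcal{L}_c(G_2)$. At this stage (1) and (3) are equivalent. To absorb (2) and (4) I would invoke the two facts collected in \cite{zahirovic2020study}: the directed power graph determines the enhanced power graph --- indeed $x$ and $y$ are $\mathrm{EPow}$-adjacent iff $\langle x,y\rangle$ is cyclic, which holds exactly when $x$ and $y$ have a common vertex from which both are reachable in $\overrightarrow{\mathrm{Pow}}(G)$, a manifestly digraph-invariant condition --- giving (2)$\,\Rightarrow\,$(3); and the undirected power graph already determines the directed power graph (the affirmative answer to Cameron's question), giving (4)$\,\Rightarrow\,$(2). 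Chaining these with (3)$\,\Rightarrow\,$(1) closes every remaining gap: (4)$\,\Rightarrow\,$(2)$\,\Rightarrow\,$(3)$\,\Rightarrow\,$(1) and (2)$\,\Rightarrow\,$(3)$\,\Rightarrow\,$(1), while (1) implies all three, so all four statements are equivalent. This also dovetails with Corollary~\ref{th:hasse_equiv}, since mutual reconstruction of the invariants forces the rigidity classes to coincide.

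The part I expect to require the most care is not the forward cascade but the junction between our lattice-based reconstructions and the graph-to-graph equivalences of \cite{zahirovic2020study}: the forward directions produce graphs carrying a canonical vertex labelling derived from the lattice, whereas Theorem~\ref{th1} and the cited equivalences operate on \emph{unlabelled} abstract isomorphism types, so I would make sure the two notions of ``isomorphic'' compose cleanly into a single equivalence rather than merely matching vertex counts. The deepest external input is the step (4)$\,\Rightarrow\,$(2), which rests on the Cameron-type theorem that the power graph of a finite group determines its directed power graph; everything else is either one of our own theorems or the elementary common-reachability observation above.
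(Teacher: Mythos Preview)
Your proposal is correct and follows essentially the same route as the paper: the paper derives Corollary~\ref{cor4} in one line by combining the Main Theorem (the mutual reconstruction between $\mathcal{L}_c(G)$ and the power-type graphs, underlying Corollary~\ref{th:hasse_equiv}) with Corollary~7 of \cite{zahirovic2020study}, which supplies the equivalence of (2), (3), and (4). You unpack this into the individual implications --- using Theorems~\ref{thm:bottom_up}--\ref{thm:dir-power-bottom-up} for (1)$\Rightarrow$(2),(3),(4), Theorem~\ref{th1} for (3)$\Rightarrow$(1), and the cited literature for (4)$\Rightarrow$(2)$\Rightarrow$(3) --- and you add the useful observation that the generator-orderings in Theorem~\ref{thm:canonical-labels} affect only the labelling, not the isomorphism type; this is a point the paper leaves implicit but which is needed for the forward implications to be well-posed at the level of unlabelled graphs.
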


We consider the connection between the isomorphism problem for subgroup lattices and that for power-type graphs from two perspectives:

\begin{enumerate}
    \item If two groups are \(\mathcal{L}\)-isomorphic, then they are also \(\mathcal{L}_c\)-isomorphic, and by Corollary~\ref{cor4}, their power-type graphs are isomorphic. Therefore, the isomorphism problem for the power-type graph reduces to the isomorphism problem for the lattice of cyclic subgroups.  Thus, instead of comparing the full graph structures on the elements of the groups, it is sufficient to compare the simpler poset $\mathcal{L}_c(G)$. For example, by Corollary~12 in \cite{tuarnuauceanu2006groups}, two finite abelian groups are isomorphic if and only if they are \(\mathcal{L}\)-isomorphic and have the same order. 
Thus, for abelian groups of the same order, isomorphism of their subgroup lattices directly implies isomorphism of their power graphs (a fact also proved in \cite{cameron2010power}  using the group structure). Moreover, if two groups are \(\mathcal{L}\)-isomorphic, then they have the same number of elements of each order (by Corollary~3 of \cite{cameron2011power}).

\item If a group \(G\) is determined by its power-type graph, then \(G\) is determined by its cyclic subgroup lattice, and consequently it is also determined by its subgroup lattice. Therefore, any group that is determined by its power-type graph is also determined by its subgroup lattice. Some examples of groups that are known to be determined by their power-type graph (see \cite{mirzargar2025finite} and references there in), and hence also determined by their subgroup lattice, include:
 generalized quaternion groups, symmetric groups, dihedral groups, \(Q_8 \times \mathbb{Z}_n\) with \(n\) odd, \(\left(\prod_{i=1}^{m} \mathbb{Z}_2\right) \times \mathbb{Z}_n\) with \(n\) odd, and \(\mathbb{Z}_p \times \mathbb{Z}_p \times \mathbb{Z}_n\) where \(\gcd(n,p)=1\),
the semidihedral group $SD_{2^n}=\langle x, a\mid x^2 = 1 = a^{2^{n-1}}, a^x = a^{2^{n-2}-1} \rangle$, where $n \geq 3$,  the  group $\langle x, a \mid x^p = 1 = a^{p^{n-1}}, a^x = a^{1+p^{n-2}}\rangle$, where $n \geq 3$.

\end{enumerate}

Moreover, in results such as Theorem~2.8 of \cite{mirzargar2025finite}, it is shown that  $EPow(G)\cong EPow(\mathbb{Z}_p\times \mathbb{Z}_p\times \mathbb{Z}_p\times \mathbb{Z}_n)$  if and only if $G\cong\mathbb{Z}_p\times\mathbb{Z}_p \times\mathbb{Z}_p\times \mathbb{Z}_n$ or $K\times\mathbb{Z}_n$, where $K$ represents the unique non-abelian group of order $p^3$ with exponent $p$ and $gcd(n,p)=1$. It is straightforward to verify that $\mathbb{Z}_p\times\mathbb{Z}_p \times\mathbb{Z}_p\times \mathbb{Z}_n$ and $K\times\mathbb{Z}_n$ ($K$ is the non-abelian group of order $p^3$ with exponent $p$ and $gcd(n,p)=1$) are \(\mathcal{L}_c\)-isomorphic
 
Also, \cite{mirzargar2022finite} classifies power graphs according to group orders, and in particular identifies orders for which every group of that order is determined by its power graph. By the above discussion, the same classification applies to groups determined by their subgroup lattice. For instance, by Proposition~2.1 of \cite{mirzargar2022finite}, every group of order \(2p^2\)($p$ is prime), is determined by its subgroup lattice.

\section{Future Work}

The results of this paper establish a precise two way correspondence between the enhanced power graph 
and the cyclic subgroup lattice of a finite group, and show that all power-type graphs, that is, power, directed power, 
enhanced, and difference graphs, can be reconstructed solely from $\mathcal{L}_c(G)$. 
This framework opens several promising directions for further research. 
In particular, the survey of Cameron~\cite{cameron2022} highlights a number of open questions concerning 
the structural role and classification of power-type graphs; our reconstruction approach provides a natural 
setting in which several of these questions can now be revisited.

A natural next step is to investigate the extent to which the reconstruction procedure can be extended 
beyond cyclic subgroups. Since every subgroup is generated by its collection of cyclic subgroups, 
it is reasonable to ask whether the full subgroup lattice $\mathcal{L}(G)$ can be recovered from 
the enhanced power graph or from other suitably enriched graph invariants. 
Addressing this would require understanding how non-cyclic subgroups can be recognized as joins or 
intersections of families of cyclic subgroups appearing in the reconstructed lattice.

Another line of inquiry concerns the classification of groups determined by their power-type graphs. 
While our reconstruction theorem resolves the isomorphism problem for enhanced power graphs in terms of 
cyclic subgroup lattices, identifying exactly which finite groups are determined by their full subgroup 
lattices remains a rich and subtle question. Exploring the implications of the present framework for these 
classical classification problems may lead to new characterizations within specific group families, 
such as $p$-groups or nilpotent groups.

Finally, the reconstruction viewpoint suggests potential applications in computational group theory. 
The ability to recover structural information from graph data raises the possibility of efficient 
algorithms for identifying group properties or distinguishing non-isomorphic groups using only 
graphical representations. Developing such algorithms, perhaps in analogy with those studied in 
\cite{das2023isomorphismproblempowergraphs}, presents an intriguing and promising direction for future work.

\section*{Acknowledgment}
 We wish to express our sincere gratitude to the Visiting or Sabbatical Scientist Support Program for their invaluable assistance. Furthermore, the authors acknowledges the financial support provided by the Scientific and Technological Research Council of Turkey (TUBITAK) through the Bideb 2221 program.

\bibliography{ref}

\begin{thebibliography}{10}

\bibitem{aalipour2016structure}
G.~Aalipour, S.~Akbari, P.~J. Cameron, R.~Nikandish, and F.~Shaveisi.
\newblock On the structure of the power graph and the enhanced power graph of a
  group.
\newblock {\em Electronic Journal of Combinatorics}, 24(3):P3.16, 2017.

\bibitem{biswas2024difference}
S.~Biswas, P.~J. Cameron, A.~Das, and H.~K. Dey.
\newblock On the difference of the enhanced power graph and the power graph of
  a finite group.
\newblock {\em Journal of Combinatorial Theory, Series A}, 208:105932, 2024.

\bibitem{bubboloni2025critical}
D.~Bubboloni and N.~Pinzauti.
\newblock Critical classes of power graphs and reconstruction of directed power
  graphs.
\newblock {\em Journal of Group Theory}, 28(3):713--739, 2025.

\bibitem{cameron2010power}
P.~J. Cameron.
\newblock The power graph of a finite group, ii.
\newblock {\em Journal of Group Theory}, 13:779--783, 2010.

\bibitem{cameron2022}
P.~J. Cameron.
\newblock Graphs defined on groups.
\newblock {\em Int. J. Group Theory}, 11(3):53--101, 2021.

\bibitem{cameron2011power}
P.~J. Cameron and S.~Ghosh.
\newblock The power graph of a finite group.
\newblock {\em Discrete Mathematics}, 311(13):1220--1222, 2011.

\bibitem{chakrabarty2009undirected}
I.~Chakrabarty, S.~Ghosh, and M.~Sen.
\newblock Undirected power graphs of semigroups.
\newblock {\em Semigroup Forum}, 78:410--426, 2009.

\bibitem{das2023isomorphismproblempowergraphs}
B.~Das, J.~Ghosh, and A.~Kumar.
\newblock {The Isomorphism Problem of Power Graphs and a Question of Cameron}.
\newblock In S.~Barman and S.~Lasota, editors, {\em 44th IARCS Annual
  Conference on Foundations of Software Technology and Theoretical Computer
  Science (FSTTCS 2024)}, volume 323 of {\em Leibniz International Proceedings
  in Informatics (LIPIcs)}, pages 20:1--20:23, Dagstuhl, Germany, 2024. Schloss
  Dagstuhl -- Leibniz-Zentrum f{\"u}r Informatik.

\bibitem{kelarev2000combinatorial}
A.~V. Kelarev and S.~J. Quinn.
\newblock A combinatorial property and power graphs of groups.
\newblock {\em Contributions to general algebra}, 12(58):229--235, 2000.

\bibitem{kumar2021recent}
A.~Kumar, L.~Selvaganesh, P.~J. Cameron, and T.~T. Chelvam.
\newblock Recent developments on the power graph of finite groups--a survey.
\newblock {\em AKCE International Journal of Graphs and Combinatorics},
  18(2):65--94, 2021.

\bibitem{ma2022survey}
X.~Ma, A.~Kelarev, Y.~Lin, and K.~Wang.
\newblock A survey on enhanced power graphs of finite groups.
\newblock {\em Electronic Journal of Graph Theory \& Applications}, 10(1),
  2022.

\bibitem{mirzargar2022finite}
M.~Mirzargar and R.~Scapellato.
\newblock Finite groups with the same power graph.
\newblock {\em Communications in Algebra}, 50(4):1400--1406, 2022.

\bibitem{mirzargar2025finite}
M.~Mirzargar, S.~Sorgun, and M.~Nadjafi-Arani.
\newblock On finite nilpotent groups with the same enhanced power graph.
\newblock {\em arXiv preprint arXiv:2503.14722}, 2025.

\bibitem{tuarnuauceanu2006groups}
M.~T{\u{a}}rn{\u{a}}uceanu.
\newblock {\em Groups determined by posets of subgroups}.
\newblock Matrix Rom, 2006.

\bibitem{zahirovic2020study}
S.~Zahirovi{\'c}, I.~Bo{\v{s}}njak, and R.~Madar{\'a}sz.
\newblock A study of enhanced power graphs of finite groups.
\newblock {\em Journal of Algebra and its Applications}, 19(04):2050062, 2020.

\end{thebibliography}
\bibliographystyle{abbrv}

\end{document}